\documentclass{amsart}
\usepackage{color}
\usepackage{amssymb}
\usepackage{amsmath}
\usepackage{amsthm}
\usepackage{amscd}
\usepackage{mathrsfs}
\usepackage{tikz}
\usepackage{graphicx}
\usepackage{subcaption}
\usepackage{bm}
\theoremstyle{definition}
\newtheorem{dfn}{Definition}[section]
\theoremstyle{plain}
\newtheorem{thm}[dfn]{Theorem}

\newtheorem{prop}[dfn]{Proposition}
\newtheorem{cor}[dfn]{Corollary}
\newtheorem{lm}[dfn]{Lemma}
\theoremstyle{remark}
\newtheorem{exam}[dfn]{Example}
\newtheorem{rem}[dfn]{Remark}
\usepackage[linktocpage,colorlinks,linkcolor=blue,anchorcolor=blue,citecolor=blue,urlcolor=black,pagebackref]{hyperref}
\title{The spectra of polynomials in free (semi)circular operators}
\author{Akihiro Miyagawa}
\address{Department of Mathematics, Kyoto University, Kitashirakawa Oiwake-cho, Sakyo-ku, 606-8502, Japan}
\email{miyagawa.akihiro.8p@kyoto-u.ac.jp}
\begin{document}
\begin{abstract}
    We show that any $L^2$-bounded rational function in free semicircular random variables is a bounded operator, which implies the coincidence of the usual spectrum and $L^2$-spectrum for rational functions. Based on this observation, we also compute the spectra of several polynomials in free circular random variables.   
\end{abstract}
\maketitle

\section{Introduction}
For an operator $X$, the spectral set $\mathrm{spec}(X)$ of $X$ is the set of $\lambda \in \mathbb{C}$ such that $(\lambda-X)^{-1}$ does not exist as a bounded operator.
Computing the spectrum of a polynomial in free random variables is one of the basic problems in free probability theory since, in the self-adjoint case, this is equivalent to computing the support of its spectral measure. In connection with Random Matrix theory,
Haageurp and Thorbj\o rnsen \cite{MR2183281}
proved the almost sure convergence of the operator norms of polynomials in independent Gaussian Unitary Ensembles (GUE) towards those in freely independent semicircular random variables. This phenomenon is called the strong convergence. By continuous function calculus, the strong convergence implies almost sure convergence of the spectra of self-adjoint polynomials with respect to the Hausdorff distance. Recently, there have been many developments on the strong convergence phenomenon with applications to other fields of Mathematics (see a survey by van Handel \cite{vanhandel2026strongconvergencephenomenon}). However, as written in \cite[Section 6.9]{vanhandel2026strongconvergencephenomenon}, the spectra of non-self-adjoint polynomials are poorly
understood. 

In this paper, we investigate the spectra of (not necessarily self-adjoint) polynomials in free semicircular and circular random variables. An easy, but important observation is that, for a polynomial $P$ in free semicircles, $(\lambda-P)^{-1}$ is a ``rational function" in a non-commutative sense. Let $D(s)$ be the set of such rational functions (see Section \ref{Preliminaries} for a precise definition).  These rational functions are realized as closed operators affiliated with the von Neumann algebra generated by free semicircles, which are usually unbounded operators. Our first main theorem shows that any $L^2$-bounded rational function in free semicircles is indeed a bounded operator.
\begin{thm}\label{Main}
    If $f \in D(s) \cap L^2(s,\tau)$, then $f \in L^{\infty}(s,\tau)$.
\end{thm}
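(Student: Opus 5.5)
We argue through linear realizations of rational functions. Every element $f$ of $D(s)$ (the division closure of noncommutative polynomials in $s=(s_1,\dots,s_n)$ inside the $*$-algebra of affiliated operators) admits a linear realization
\[
f = u^{*} Q(s)^{-1} v, \qquad Q(s)=A_0\otimes 1+\sum_{j=1}^n A_j\otimes s_j ,
\]
where $A_j\in M_N(\mathbb{C})$ and $u,v\in\mathbb{C}^N$. The matrix $Q(s)$ is invertible as an affiliated operator in $M_N(\mathbb{C})\otimes L^\infty(s,\tau)$. We may take the realization minimal and, by the known full-rank results (free semicircles have no zero divisors in the affiliated algebra, and full linear matrices evaluated at free semicircles are invertible), we may take $Q$ to be a full linear pencil. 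The goal is to show that $L^2$-boundedness of $f$ forces the ``singular part'' of $Q(s)^{-1}$ to be invisible to $u$ and $v$.

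\emph{Step 1.} Describe $\ker$-type singularities of $Q(s)$ near its spectrum. We consider the operator-valued semicircular element $X=\sum_j A_j\otimes s_j$ over $M_N(\mathbb{C})$. The Hermitization $\begin{pmatrix}0&Q\\Q^*&0\end{pmatrix}$ is again a linear pencil, hence a self-adjoint $M_{2N}$-valued semicircular plus a constant. Its $M_{2N}$-valued Cauchy transform is algebraic, being determined by the matrix Dyson equation $G(z)=(z-a_0-\eta(G(z)))^{-1}$. We deduce that the spectral measure $\mu$ of $|Q(s)|$ near $0$ has a density that behaves like a power law $t^{\alpha}$ (Puiseux expansion of an algebraic function). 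In particular $\tau(|Q(s)|^{-2}\text{-weighted quantities})$ are finite iff the algebraic behaviour at $0$ is mild enough. Moreover there is no mass at $0$, by invertibility in the affiliated algebra.

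\emph{Step 2.} Express $\|f\|_2^2=\tau\bigl(v^*Q^{-1*}uu^*Q^{-1}v\bigr)$, and the $L^p$ norms of $f$ similarly, as integrals against matrix-valued spectral data of $Q$. Here we use the realization of $|f|^2$ itself as a rational function with a realization built from $Q$ and $Q^*$. We then use a self-improvement argument. The key claim is that if $g\in D(s)\cap L^2$, then $g\in L^p$ for all $p<\infty$ with a uniform bound, so that $\|g\|_\infty=\lim_p\|g\|_p$ is finite. For this we apply Step 1 to a realization of $g^*g$: the distribution of the positive rational function $g^*g\in D(s)$ has a Cauchy transform that is algebraic. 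This uses the linearization trick, since $(z-g^*g)^{-1}$ is again a rational function realized by a pencil whose operator-valued Cauchy transform solves an algebraic Dyson equation. An algebraic Stieltjes transform of a probability measure has only finitely many singular points on $\mathbb{R}$, and near $+\infty$ its Puiseux expansion in $1/z$ controls the tail. Finiteness of the first moment, $\tau(g^*g)<\infty$, forces the tail to decay like $t^{-\beta}$ with $\beta>2$ in a Puiseux sense. The main point is then to rule out any tail at all: an algebraic function analytic near $z=\infty$ in a punctured neighbourhood with a finite first moment has a convergent Laurent expansion at $\infty$, hence compactly supported measure.

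\emph{Main obstacle.} The hard step is justifying that $G(z)=\tau((z-g^*g)^{-1})$ extends analytically (as an algebraic function with no branch point) to a neighbourhood of $\infty$ once the first moment is finite. A measure with unbounded support could still have an algebraic Stieltjes transform with a branch point at $\infty$, for example $z^{-1/2}$-type behaviour.

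I would first try to exploit that the Dyson equation comes from a \emph{semicircular} (bounded) pencil. The unboundedness of $g^*g$ comes only from inverting $Q$, so the large-$|z|$ behaviour of $G$ corresponds to small singular values of $Q(s)$. This converts the problem into showing that the density of $|Q(s)|$ near $0$ pairs with $u,v$ with an integer exponent. If that fails, I would argue via the Hermitized Dyson equation directly: I would show that the Puiseux exponents at $0$ of the compressed resolvent $u^*(\,\cdot\,)v$ are integers whenever the $L^2$ integral converges. The proof would use the positivity (Hermitian-part) structure of the solutions of the matrix Dyson equation.
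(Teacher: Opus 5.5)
Your proposal has a genuine gap, and it sits exactly at the step that carries the theorem.

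\textbf{The key step is a restatement, and the sketch for it fails.} The ``self-improvement'' claim that every $g\in D(s)\cap L^2(s,\tau)$ satisfies $\sup_p\|g\|_p<\infty$ is just a restatement of $g\in L^\infty(s,\tau)$. The principle you invoke for it is false in general: an algebraic Stieltjes transform of a measure with finite first moment need not have a convergent Laurent expansion at $\infty$.
\begin{itemize}
\item The probability measure $\frac{8}{\pi}t^{1/2}(1+t)^{-3}\,dt$ on $(0,\infty)$ has finite first moment. Its Cauchy transform is a rational function of $z$ plus a rational function times $\sqrt{-z}$, and its expansion at $\infty$ contains a term of order $|z|^{-5/2}$. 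Yet its support is unbounded.
\item ``Analytic in a punctured neighbourhood of $\infty$'' already presupposes compact support, since by Stieltjes inversion $G$ jumps across the support.
\end{itemize}

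\textbf{The exponent is delicate.} The remark following Theorem~\ref{freeHaar} shows this. There $g=(2-s)^{-1}\in D(s)$ is in $L^1$ but unbounded. The distribution of $g^*g=(2-s)^{-2}$ has tail density of order $t^{-7/4}$, so $\tau\bigl((g^*g)^{1/2}\bigr)<\infty=\tau(g^*g)$. A correct argument along your lines would have to show that, for rational functions of free semicircles, this tail exponent can never give $\tau(g^*g)<\infty$ while the support is unbounded. You give no mechanism for that. The appeals to positivity in the Dyson equation and to integer Puiseux exponents are hopes, not arguments.

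\textbf{Algebraicity is also not automatic.} For rational $g$, the pencil realizing $(z-g^*g)^{-1}$ has an inverse whose other entries (such as those of $Q(s)^{-1}$) need not be integrable. So the matrix Dyson equation cannot simply be invoked at that spectral parameter.

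\textbf{The missing idea.} The paper uses the $L^2$ hypothesis through the Fock space coefficients and the commutator characterization of Theorem~\ref{keythm}, not through spectral asymptotics. Write $f\Omega=\sum_w\alpha_we_w$.
\begin{enumerate}
\item The operators $(1+ff^*)^{-1}f$ and $(1+ff^*)^{-1}$ lie in $D(s)\cap L^\infty(s,\tau)=C_{\mathrm{div}}(s)$, so their commutators with $r^*(e_i)$ have finite rank.
\item By the Leibniz rule and injectivity of $(1+ff^*)^{-1}$, the commutator $[r^*(e_i),f]$ has finite-dimensional range on $\mathcal{F}_{\mathrm{alg}}(H)$. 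Concretely, the vectors $\sum_u\alpha_{uv}e_u$ span a finite-dimensional space.
\item A Hadamard-product trick transfers this to the vectors $\sum_u|\alpha_{uv}|^2e_u$. Summing over word lengths, the shifts of $a_n=\sum_{|w|=n}|\alpha_w|^2$ lie in a finite-dimensional space.
\item Hence $a_n={}^t\!\lambda B^n\gamma$, and $a_n\to0$ forces the spectrum of $B$ into the open unit disc. So $a_n$ decays exponentially.
\item The Haagerup inequality then makes $\sum_n\sum_{|w|=n}\alpha_wU_w$ converge in operator norm.
\end{enumerate}
This is exactly where the exponent $2$ is used, and it is the ingredient your approach lacks.
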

Once we prove the theorem above, we have the following corollary, which tells us the coincidence of the usual spectrum and ``$L^2$-spectrum".
\begin{cor}\label{spectrum}
    Let $f \in D(s)$. Then, the following three sets are equal: 
    \begin{align*}
       \mathrm{spec}(f)
        &= \mathrm{spec}^2_1(f) =\{\lambda \in \mathbb{C}|\ (\lambda-f)^{-1}\notin L^2(s,\tau)\}
    \end{align*} 
    where $\mathrm{spec}(f)$ is the spectral set of $f$ and $\mathrm{spec}^2_1(f)$ is the complement of the set of $\lambda_0\in \mathbb{C}$ such that there is a neighborhood $N_{\lambda_0}$ of $\lambda_0$ such that $(\lambda-f)^{-1}$ is in $L^2(s,\tau)$ for any $\lambda \in N_{\lambda_0}$.
\end{cor}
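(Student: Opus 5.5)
Let me write $S=\{\lambda:(\lambda-f)^{-1}\notin L^2(s,\tau)\}$ for the third set. The plan is to prove
\[
\mathrm{spec}(f)\subseteq \mathrm{spec}^2_1(f)\subseteq S\subseteq \mathrm{spec}(f)
\]
using Theorem \ref{Main} together with the closure properties of $D(s)$. We may treat $D(s)$ as a division ring of closed densely defined operators affiliated with $L^\infty(s,\tau)$, which is implicit in its construction in Section \ref{Preliminaries} since $(\lambda-P)^{-1}$ makes sense there. For $\lambda\in\mathbb{C}$, either $\lambda-f=0$, i.e.\ $f=\lambda$ is a scalar, or $(\lambda-f)^{-1}\in D(s)$. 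The scalar case is trivial: all three sets equal $\{\lambda\}$, because $(\lambda-f)^{-1}$ does not exist as an operator at all, so in particular it is not in $L^2$ and not bounded. From now on assume $f$ is not a scalar.

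\emph{The inclusion $S\subseteq\mathrm{spec}(f)$.} Suppose $\lambda\notin\mathrm{spec}(f)$. Then $(\lambda-f)^{-1}$ is bounded, hence lies in $L^\infty(s,\tau)\subset L^2(s,\tau)$ because $\tau$ is a finite trace, so $\lambda\notin S$. One point needs checking: the bounded inverse in the sense of spectral theory must coincide with the affiliated inverse $(\lambda-f)^{-1}\in D(s)$. This holds by uniqueness of inverses in the $*$-algebra of affiliated operators. The same uniqueness also shows that $f$ itself is bounded when $\mathrm{spec}(f)\neq\mathbb{C}$, so we do not need to decide beforehand whether $f$ is bounded.

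\emph{The inclusion $\mathrm{spec}^2_1(f)\subseteq S$.} If $\lambda\notin S$, then $(\lambda-f)^{-1}\in D(s)\cap L^2(s,\tau)$. By Theorem \ref{Main} it is bounded, so $\lambda\notin\mathrm{spec}(f)$. Since the resolvent set of $f$ is open (standard for closed operators), there is a neighbourhood $N_\lambda$ disjoint from $\mathrm{spec}(f)$. By the previous paragraph, $(\mu-f)^{-1}\in L^2$ for every $\mu\in N_\lambda$, so $\lambda\notin\mathrm{spec}^2_1(f)$. This step already gives $\mathrm{spec}(f)=S$.

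\emph{The inclusion $\mathrm{spec}(f)\subseteq\mathrm{spec}^2_1(f)$.} If $\lambda\notin\mathrm{spec}^2_1(f)$, then in particular $(\lambda-f)^{-1}\in L^2$. By Theorem \ref{Main} this inverse is bounded, so $\lambda\notin\mathrm{spec}(f)$.

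The main obstacle is the bookkeeping that identifies the algebraic inverse in $D(s)$ with the operator-theoretic resolvent. This requires that $\lambda-f$ is injective with dense range as an affiliated operator, and that a bounded everywhere-defined inverse agrees with it. I would handle this with the standard fact that in the finite von Neumann algebra setting, an affiliated operator is invertible in the affiliated algebra if and only if it is injective, and that inverses there are unique. Beyond this, Theorem \ref{Main} does all the work.
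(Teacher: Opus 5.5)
Your proof is correct and uses the same ingredients as the paper: the skew-field property of $D(s)$ puts $(\lambda-f)^{-1}$ in $D(s)$, Theorem \ref{Main} upgrades $L^2$ to $L^\infty$, and the rest is the openness of the resolvent set. You only traverse the cycle of inclusions in the opposite direction, so you invoke Theorem \ref{Main} twice, whereas the paper's chain $S\subseteq\mathrm{spec}^2_1(f)\subseteq\mathrm{spec}(f)$ is trivial and Theorem \ref{Main} is needed only once, for $\mathrm{spec}(f)\subseteq S$. One aside is false, though you never use it: $f\in D(s)$ need not be bounded when $\mathrm{spec}(f)\neq\mathbb{C}$; for example $f=s_1^{-1}\in D(s)$ is unbounded with $\mathrm{spec}(f)=(-\infty,-\tfrac12]\cup[\tfrac12,\infty)$, and indeed $(0-f)^{-1}=-s_1$ is bounded.
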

\begin{proof}
Obviously, we have the following inclusions,
 \begin{align*}
       \mathrm{spec}(f)^{c}
        \subset \mathrm{spec}^2_1(f)^c \subset \{\lambda \in \mathbb{C}|\ (\lambda-f)^{-1}\in L^2(s,\tau)\}.
    \end{align*} 
For the converse, we may assume $f \notin \mathbb{C}$ ($f \in \mathbb{C}$ is a trivial case). Since $f \in D(s)$ and $D(s)$ is a skew field, $(\lambda-f)^{-1}\in D(s)$ for any $\lambda \in \mathbb{C}$. By Theorem \ref{Main}, we have $D(s) \cap L^{2}(s,\tau)=D(s) \cap L^{\infty}(s,\tau)=C_{\mathrm{div}}(s)$, which concludes the proof.
\end{proof}
 The notion of the $L^p_n$-spectrum $\mathrm{spec}^p_n(f)$ was introduced by Kemp and Hall \cite[Definition 6.1]{MR3998225} for the study of the support of the Brown measures. The Brown measure \cite{MR866489} is a probability measure defined for any (not necessarily normal) operator in a tracial von Neumann algebra, which is considered as a natural extension of the spectral measure. It is known that the usual spectrum $\mathrm{spec}(f)$ of any operator $f$ contains the support of its Brown measure \cite[Proposition 2.17]{MR2339369}. 
Moreover, Zhong \cite[Theorem 4.6]{MR5025950} proved $L^2_1$-spectrum $\mathrm{spec}^2_1(f)$ of any operator $f$ contains the support of its Brown measure. However, the support of the Brown measure of a non-normal operator is not necessarily equal to its spectrum. 
A typical example of such a non-normal operator is a $\mathscr{R}$-diagonal element whose distribution is characterized by the product $uh$ of a free pair of the Haar unitary $u$ and positive operator $h$. Haagerup and Larsen proved that the support of the Brown measure of a $\mathscr{R}$-diagonal element is an annulus.
 \begin{thm}[Theorem 4.4 in \cite{MR1784419}]\label{Haagerup-Larsen}
    Let $a$ be $ \mathscr{R}$-diagonal element in a $\mathrm{W}^*$-probability space. Then, the support of the Brown measure of $a$ is
    \[ \{\lambda \in \mathbb{C}|\ \|a^{-1}\|^{-1}_2 \le \lambda \le \|a\|_2 \}.\]
    
 \end{thm}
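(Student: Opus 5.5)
Since the paper cites this result from \cite{MR1784419} and does not prove it, what follows is the route I would take to reprove it: use the Fuglede--Kadison determinant description of the Brown measure,
\[
\mu_a=\frac{1}{2\pi}\Delta_\lambda \log \Delta(a-\lambda), \qquad \log\Delta(x)=\tau(\log|x|),
\]
together with the rotational symmetry of $\mathscr{R}$-diagonal elements. The radii in the statement are understood as moduli, so the claim is that the support is $\{\lambda:\ \|a^{-1}\|_2^{-1}\le|\lambda|\le\|a\|_2\}$. Here $\|a^{-1}\|_2=\infty$ when $a^{-1}\notin L^2$, so the inner radius is then $0$.

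\textbf{Step 1: reduction to a radial problem.} Write $a=uh$ in distribution, with $u$ a Haar unitary free from $h\ge 0$. Since $e^{i\theta}u$ is again a Haar unitary free from $h$, the elements $a$ and $e^{i\theta}a$ have the same $*$-distribution. Hence $\mu_a$ is rotation invariant, and $L(\lambda):=\log\Delta(a-\lambda)$ depends only on $r=|\lambda|$. For a radial subharmonic function, Green's formula gives
\[
F(r):=\mu_a(\{|\lambda|\le r\})=r\,\frac{d}{dr}L(r)
\]
almost everywhere. So it suffices to show three things:
\begin{itemize}
\item $F\equiv 0$ for $r<\|a^{-1}\|_2^{-1}$;
\item $F\equiv 1$ for $r>\|a\|_2$;
\item $F$ is strictly increasing in between.
\end{itemize}

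\textbf{Step 2: the two outer regions.} For $|\lambda|>\|a\|_2$ I would show $L(\lambda)=\log|\lambda|$, i.e.\ $\Delta(a-\lambda)=|\lambda|$. The strategy is to establish $\log\Delta(a-\lambda)=\max\{\log|\lambda|,\ \cdot\,\}$-type formulas from an explicit computation rather than from a Neumann series, which only works when $|\lambda|>\|a\|$. Similarly, for $|\lambda|<\|a^{-1}\|_2^{-1}$ I would show $L(\lambda)=\log\Delta(a)$. In both cases $F$ is constant there, equal to $1$ and $0$ respectively.

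\textbf{Step 3: the explicit computation (main obstacle).} I would regularize with $\varepsilon>0$, setting $L_\varepsilon(\lambda)=\tfrac12\tau\bigl(\log((a-\lambda)^*(a-\lambda)+\varepsilon^2)\bigr)$, and compute $\partial_\lambda L_\varepsilon$ through the hermitization $2\times2$ matrix
\[
\begin{pmatrix} i\varepsilon & a-\lambda\\ (a-\lambda)^* & i\varepsilon\end{pmatrix}.
\]
Because $a$ is $\mathscr{R}$-diagonal, the $M_2$-valued $R$-transform of $\begin{pmatrix}0&a\\a^*&0\end{pmatrix}$ is diagonal and is determined by the $R$-transform of $a^*a$. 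The subordination equations should then reduce to a scalar fixed-point equation in the $S$-transform $S_{a^*a}$. Letting $\varepsilon\to0$, I expect
\[
F(r)=1+S_{a^*a}^{-1}(r^{-2}) \quad\text{for } r\in\bigl(\|a^{-1}\|_2^{-1},\|a\|_2\bigr),
\]
with $F$ equal to $0$ or $1$ outside this interval.

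The main difficulty is justifying this limit: one has to control the $\varepsilon\to0$ behaviour and handle a possible kernel of $a$ (an atom of $h$ at $0$). For the latter I would approximate $h$ by $h+\delta$ and use upper semicontinuity of $\log\Delta$.

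\textbf{Step 4: conclusion.} Finally I would use the standard facts that $S_{a^*a}$ is strictly decreasing on $(-1,0)$ and maps it onto $\bigl(\|a\|_2^{-2},\ \|a^{-1}\|_2^{2}\bigr)$. These endpoints come from the first moments: $\tau(a^*a)$ as $z\to0$ and $\tau((a^*a)^{-1})$ as $z\to-1$. It follows that $F$ increases strictly from $0$ to $1$ exactly across the annulus, so the support is precisely the closed annulus.
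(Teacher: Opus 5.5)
The paper gives no proof of this statement; it is quoted from Haagerup--Larsen, so the only available comparison is with their argument. You are right to read the radii as bounds on $|\lambda|$.

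\textbf{Comparison of routes.} Your route is genuinely different from theirs. Haagerup--Larsen stay in scalar free probability. For a Haar unitary $v$ that is $*$-free from $a$, the operators $|a-\lambda|$ and $|a-\lambda v|$ have the same distribution. Since $a$ and $\lambda v$ are $*$-free $\mathscr{R}$-diagonal elements, the symmetrized distributions satisfy $\tilde\mu_{|a-\lambda|}=\tilde\mu_{|a|}\boxplus\frac12(\delta_{|\lambda|}+\delta_{-|\lambda|})$. The determinant is then computed with scalar $R$- and $S$-transforms, giving $\mu_a\bigl(B(0,S_{a^*a}(t-1)^{-1/2})\bigr)=t$.

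You instead hermitize and use the $M_2(\mathbb{C})$-valued $R$-transform of $\begin{pmatrix}0&a\\a^*&0\end{pmatrix}$. Because only alternating cumulants of $a$ survive, this transform does depend only on the diagonal of its argument and is diagonal. Your expected formula $F(r)=1+S_{a^*a}^{-1}(r^{-2})$ is exactly theirs rewritten, so the plan is sound. Your approach is the systematic one, since the same machinery handles Brown measures of non-$\mathscr{R}$-diagonal operators. Theirs needs only free additive convolution with a symmetric Bernoulli law.

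Note, however, that Step 2 is not an independent argument. The outer regions have to come out of the same $\varepsilon\to0$ analysis, so the whole content of the theorem sits in Step 3, which you only announce ("I expect").

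\textbf{The step that fails: the kernel case.} Your treatment of $\ker a\neq0$ does not work as written. For $a_\delta=u(h+\delta)$, upper semicontinuity of $\log\Delta$ gives only $\limsup_{\delta\to0}\log\Delta(a_\delta-\lambda)\le\log\Delta(a-\lambda)$. Brown measures are not continuous under norm perturbations. This one-sided bound says only that the logarithmic potential of $\mu_a$ dominates that of any weak limit $\nu$ of the $\mu_{a_\delta}$. For radial measures it reads $\int\max(\log r,\log|z|)\,d\nu(z)\le\int\max(\log r,\log|z|)\,d\mu_a(z)$ for all $r>0$. That still allows $\mu_a$ to have a smaller atom at $0$ and mass further out, so it does not identify $\mu_a$.

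You do not need the approximation at all. As $\varepsilon\downarrow0$, $L_\varepsilon\downarrow L$ pointwise, and $L$ is subharmonic and locally integrable. Hence $L_\varepsilon\to L$ in $L^1_{\mathrm{loc}}$ and $\frac{1}{2\pi}\Delta_\lambda L_\varepsilon\to\mu_a$ weakly for every $a$, kernel or not. So run the $\varepsilon\to0$ analysis of your fixed-point equation for general $h$; the atom $\mu_a(\{0\})=\mu_h(\{0\})$ then comes out directly.

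\textbf{Adjustments to Step 4.} Two cases need separate handling.
\begin{itemize}
\item If $\ker a\neq0$, then $S_{a^*a}$ is defined only on $(\mu_h(\{0\})-1,0)$, not on $(-1,0)$, and it tends to $+\infty$ at the left endpoint. So $F$ jumps to $\mu_h(\{0\})$ at $0$ and then must be shown to increase strictly on $(0,\|a\|_2)$.
\item If $h$ is a nonzero scalar, $S_{a^*a}$ is constant, so it is not strictly decreasing and $S_{a^*a}^{-1}$ does not exist. The annulus then degenerates to the circle $|\lambda|=\|a\|_2$, and the two outer-region statements together with rotation invariance already give the result.
\end{itemize}
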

By this theorem, if we take a $\mathscr{R}$-diagonal element which is not invertible as a bounded operator, but is invertible as an $L^2$-operator, then there exists a gap between the spectrum and the support of the Brown measure of $a$.
As far as known examples, such a gap often comes from the gap between the usual spectrum $\mathrm{spec}(f)$ and $L^2_1$-spectrum $\mathrm{spec}^2_1(f)$. 
In the case of free semicircular operators, our main result shows that there is no gap between the usual spectrum and $L^2_1$-spectrum, which motivates us to explore the following question: 
\begin{itemize}
    \item Is there any gap between the spectrum and the support of the Brown measure of a rational (polynomial) function in free semicircular operators?
\end{itemize}
 In the second part of this paper, we compute the spectra of several types of holomorphic polynomials (polynomials without adjoint operators): homogeneous polynomials, free random walk models in \cite{driver2025matrixrandomwalkslima}, and quadratic polynomials in freely independent Voiculescu's circular operators. For homogeneous polynomials, free random walk models, we see the coincidence of their spectrum and the support of their Brown measure. For quadratic polynomials, we have the following theorem (${}^t\! b$ denotes the transpose of $b$).
\begin{thm}\label{quadratic}
  For a quadratic polynmial in free circular random variables $c_1,c_2$ written as $f= \sum_{i,j= 1}^2 a_{ij} c_i c_j +\sum_{i=1}^2 b_i c_i$ with $A=(a_{ij})\in M_2(\mathbb{C})$, $b={}^t\!\begin{pmatrix}
      b_1&b_2
  \end{pmatrix}\in \mathbb{C}^2$, the spectral set of $f$ is the set of $\lambda \in \mathbb{C}$ such that $\lambda=0$ or $\lambda$ satisfies \[\lim_{n\to \infty}Q_{\lambda}^ne_1\neq0,\] where $e_1={}^t\begin{pmatrix}
      1&0&0&0&0&0
  \end{pmatrix}$ and $Q_{\lambda}$ is the following $6\times 6$ matrix  :
  \[Q_{\lambda}=\begin{pmatrix}
      b_{\lambda}^* b_\lambda& \mathrm{Tr}(|A_{\lambda}|^2)& b_{\lambda}^*A_{\lambda} & {}^t\! b_{\lambda} \overline{A_{\lambda}}\\
      1 & 0& 0& 0 \\
      \overline{b_{\lambda}}&0 &0 & \overline{A_{\lambda}}\\
      b_{\lambda} & 0 & A_{\lambda} & 0
  \end{pmatrix}\]
  with $A_{\lambda}=\lambda^{-1}A$ and $b_{\lambda}=\lambda^{-1}b$.
  \end{thm}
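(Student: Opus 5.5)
Since $c_1,c_2$ can be written as $c_j=(s_{2j-1}+is_{2j})/\sqrt2$ with $s_1,\dots,s_4$ free semicirculars, $f$ is a polynomial in free semicirculars. Corollary~\ref{spectrum} then says that $\lambda\notin\mathrm{spec}(f)$ if and only if $(\lambda-f)^{-1}\in L^2(s,\tau)$. Here $(\lambda-f)^{-1}$ is understood as an element of $D(s)$, which is always defined.

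The case $\lambda=0$ is handled separately. The element $f$ has the form $c_1x+c_2y$ with $x,y$ in the algebra generated by $c_1,c_2$ and their constants. Since $c_1$ is not left invertible in the bounded sense, I expect a direct argument to show that $0\in\mathrm{spec}(f)$. One route: $f$ is not invertible because $\tau$-freeness together with the $\mathscr{R}$-diagonal structure forces $\ker f^*\neq 0$ or an approximate kernel.

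For $\lambda\neq0$, write $\lambda-f=\lambda(1-g)$ with $g=\sum a^{\lambda}_{ij}c_ic_j+\sum b^\lambda_i c_i$. The plan is to compute $\|(1-g)^{-1}\|_2$ through the formal Neumann series $\sum_n g^n$.

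\textbf{Step 1.} Identify $L^2$ of the circular system with a full Fock space. Here $c_i$ acts as $\ell_i+\ell_{i+2}^*$, with creation operators on four orthonormal vectors. Applied to the vacuum $\Omega$, the vector $(1-g)^{-1}\Omega$ formally solves $\xi=\Omega+g\xi$.

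\textbf{Step 2.} Decompose $\xi$ by Fock degree and track its norms through a linear recursion. The candidate state vector consists of:
\begin{itemize}
\item[] the scalar $\|\xi_n\|^2$-type quantities;
\item[] a lagged copy of these quantities;
\item[] two $\mathbb{C}^2$-valued cross terms, coming from the pairing of $\ell_i$ against $\ell^*_{i+2}$ and of $\ell^*_i$ against $\ell_{i+2}$.
\end{itemize}
This gives $1+1+2+2=6$ coordinates, matching $Q_\lambda$. The rows of $Q_\lambda$ encode the following contributions:
\begin{itemize}
\item[] $b^*b$ comes from the degree-one part;
\item[] $\mathrm{Tr}|A|^2$ comes from degree two, with a shift by two, hence the lag row $(1,0,0,0)$;
\item[] $\overline{A}$, $A$, $\bar b$, $b$ come from the cross terms.
\end{itemize}

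\textbf{Step 3.} Use freeness and the vanishing of mixed moments of circulars to show that $\|P_n\xi\|^2$, or the corresponding partial Neumann sums, is governed by $e_1^{*}$-type readouts of $Q_\lambda^n e_1$. Then $(1-g)^{-1}\in L^2$ if and only if $Q_\lambda^n e_1\to 0$.

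More precisely, the identity to aim for is
\[
\tau\bigl((g^n)^*g^m\bigr)\ \text{ is expressed through entries of }\ Q_\lambda^k e_1 .
\]
Convergence of the $L^2$ norms requires that these decay, and decay of $Q_\lambda^n e_1$ should give geometric decay because $Q_\lambda$ is a finite matrix.

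The main obstacle is Step 3. There are two difficulties.

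\emph{Justifying the Neumann-type expansion.} When $\|g\|\ge1$ the series need not converge, so I would work instead with resolvents $(z-g)^{-1}$ for large $|z|$ and continue analytically in $z$. The $L^2$ norm is given by a generating function $\sum_n z^{-n}\langle Q_\lambda^n e_1,e_1\rangle$-like rational expression, which can be analytically continued to $z=1$ exactly when no eigenvalue of $Q_\lambda$ excited by $e_1$ lies on or outside the unit circle.

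\emph{Verifying the combinatorics.} One must check that the cross terms close up into exactly the six-dimensional system. This uses that circular moments pair $c_i$ only with $c_i^*$ along noncrossing pairings, which yields a quadratic-in-Fock-degree recursion that linearizes into $Q_\lambda$.
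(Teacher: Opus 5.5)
There is a genuine gap: you never show that $(\lambda-f)^{-1}\Omega$ lies in the holomorphic subspace $\overline{\mathrm{span}}\{u_w : w\in[2]^*\}$, and the whole proof rests on this. The paper gets it from Lemma~\ref{holmorphic}. The right annihilators $r^*(u_{\overline{i}})$ commute with $c_1,c_2$, hence with $(\lambda-f)^{-1}$. This forces every coefficient of $(\lambda-f)^{-1}\Omega$ on a word containing a barred letter to vanish. On the holomorphic subspace the annihilation part of $c_i$ acts as zero, so $c_i$ is just $l(u_i)$. The equation $(\lambda-f)\xi=\Omega$ then becomes the triangular recursion $\lambda\alpha_\Omega=1$, $\lambda\alpha_i=b_i\alpha_\Omega$, $\lambda\alpha_{ijw}=a_{ij}\alpha_w+b_i\alpha_{jw}$.

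Without this restriction, your "formal solution of $\xi=\Omega+g\xi$" is not even well defined. With $c_i=\ell_i+\ell_{i+2}^*$ on the whole Fock space the equation is not triangular in the degree.

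The reduction also settles $\lambda=0$ at once, since $f$ maps the holomorphic subspace into $\Omega^\perp$. Your argument for $\lambda=0$ is only a hope. Non-invertibility of $c_1$ says nothing about $c_1x+c_2y$. Also, $f$ is not $\mathscr{R}$-diagonal in general when $A\neq0\neq b$; for $f=c_1+c_1^2$ one has $\kappa_3(f,f^*,f^*)=1$.

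Your substitute is to identify $(\lambda-f)^{-1}\Omega$ with the formal expansion by analytic continuation from large $|z|$. That only reaches the unbounded component of the resolvent set. For $\lambda$ in a bounded component, which you have not excluded, nothing ties $(\lambda-f)^{-1}\Omega$ to the formal series. So the implication $\lambda\notin\mathrm{spec}(f)\Rightarrow Q_\lambda^ne_1\to0$ is unproved. Moreover, $\|(z-g)^{-1}\|_2^2$ is not holomorphic in $z$, and $Q_\lambda$ depends on $\overline{\lambda}$. Hence continuing a generating function like $\sum_n z^{-n}\langle Q^n e_1,e_1\rangle$ to $z=1$ says nothing about $L^2$-membership.

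Second, Step 3, which carries all the content, is not carried out, and your reading of the six coordinates is wrong. They do not come from pairing creation against annihilation parts; those play no role after the reduction. Set $x_n=\sum_{|w|=n}|\alpha_w|^2$. The cross terms are $y_n=\sum_{|w|=n}\alpha_w\overline{\alpha_{1w}}$, $z_n=\sum_{|w|=n}\alpha_w\overline{\alpha_{2w}}$ and their conjugates. They appear when you expand $|a_{ij}\alpha_w+b_i\alpha_{jw}|^2$, so they come from interference between the quadratic and linear parts of $f$.

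Substituting the recursion into $y_{n+1}=\sum_i\sum_{|w|=n}\alpha_{iw}\overline{\alpha_{1iw}}$ (and similarly for $z_{n+1}$) closes the system. It gives ${}^t(x_{n+1},x_n,y_n,z_n,\overline{y_n},\overline{z_n})=|\lambda|^{-2}Q_\lambda^{n+1}e_1$. Both directions are then elementary:
\begin{itemize}
\item If $(\lambda-f)^{-1}\in L^2$, then $x_n\to0$, and Cauchy--Schwarz gives $|y_n|,|z_n|\le\sqrt{x_nx_{n+1}}\to0$.
\item If $Q_\lambda^ne_1\to0$, then $e_1$ lies in the generalized eigenspaces for eigenvalues of modulus $<1$, so $x_n$ decays geometrically. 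The formal solution $\xi=\sum\alpha_wu_w$ is then in $L^2$ with $(\lambda-f)\xi=\Omega$, hence equals $(\lambda-f)^{-1}$, and Corollary~\ref{spectrum} finishes.
\end{itemize}
No Neumann series, resolvent continuation, or computation of $\tau((g^n)^*g^m)$ is needed.
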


\begin{rem}\label{equiv_sepctral1}
    In many cases, the condition $\lim_{n\to \infty}Q_{\lambda}^ne_1\neq0$ is equivalent to the condition that the spectral radius of $Q_{\lambda}$ is greater than or equal to $1$. From the Jordan standard form of $Q_{\lambda}$, we can see that if $\lim_{n\to \infty}Q_{\lambda}^ne_1\neq0$, then the spectral radius $Q_{\lambda}$ is greater than or equal to $1$. Since we take a specific vector $e_1$, the converse statement might not be true, although we couldn't find a counterexample. 
    Actually, in the following cases, we can replace $\lim_{n\to \infty}Q_{\lambda}^ne_1\neq0$ by the spectral radius $r(Q_{\lambda})\ge 1$:
     \begin{itemize}
      \item $b=0$. 
      \item $\overline{A}A$ doesn't have different real eigenvalues.
      \item $A$ is symmetric.
  \end{itemize}
  When $P=\frac{1}{2}\sum_{i,j=1}^2c_ic_j+\frac{1}{2}\sum_{i=1}^2 c_i$ and $P=c_1c_2+c_2c_1+c_1+c_2$ ((A), (B) in Figure \ref{fig:four_images}), the corresponding matrices $A$ are
  \[ \frac{1}{2}\begin{pmatrix}
      1&1\\1&1
  \end{pmatrix}, \quad \begin{pmatrix}
      0&1\\1&0
  \end{pmatrix}\]
which are symmetric.  

When $P=c_1^2+c_2c_1-c_2^2+c_1+\sqrt{-1}c_2$ ((C) in Figure \ref{fig:four_images}), the corresponding matrix $A$ is
\[ \begin{pmatrix}
      1&0\\1&-1
  \end{pmatrix}\]
which is not symmetric, but  
\[ A\overline{A}=\begin{pmatrix}
      1&0\\0&1
  \end{pmatrix}\]
  has only one eigenvalue $1$.
  When $P=\frac{{\sqrt{-1}}}{2}c_1^2+c_1c_2+2c_2c_1+c_2^2$ ((D) in Figure \ref{fig:four_images}), then we have $b=0$. Therefore, $\lim_{n\to \infty}Q_{\lambda}^ne_1\neq0$ is equivalent to $r(Q_{\lambda})\ge 1$ for all examples in Figure \ref{fig:four_images}. 
    We discuss them at the end of this paper (Remark \ref{equivalence_spectral2}). 
\end{rem}

 In Random Matrix theory, the Brown measure of a non-normal operator has an important role. For example, it is known that the empirical eigenvalue distribution of the Ginibre ensemble almost surely converges in distribution to the uniform measure on the unit disc, which is the Brown measure of a circular operator \cite{MR2722794}. This result is extended to the case of quartic polynomials in the independent Ginibre ensembles in \cite{MR4492979}.     
 However, the weak and strong convergence of the empirical eigenvalue distributions for general polynomials in the independent Ginibre ensembles is known as one of the open problems in \cite[Section 6.9]{vanhandel2026strongconvergencephenomenon}. Our numerical computations of Theorem \ref{quadratic} in Figure \ref{fig:four_images} seem to be consistent with the strong convergence phenomenon. According to the argument in \cite[Section 6]{brailovskaya2025eigenvaluesbrownianmotionsmathrmglnmathbbc}, the coincidence of the support of the Brown measure and the spectral set seems to be one of the keys to this problem. Therefore, we hope that our results in this paper give some insights into answering this question.

\clearpage
  
\begin{figure}[htbp]
  \centering
  \begin{subfigure}{0.48\columnwidth}
    \centering
    \includegraphics[width=\textwidth]{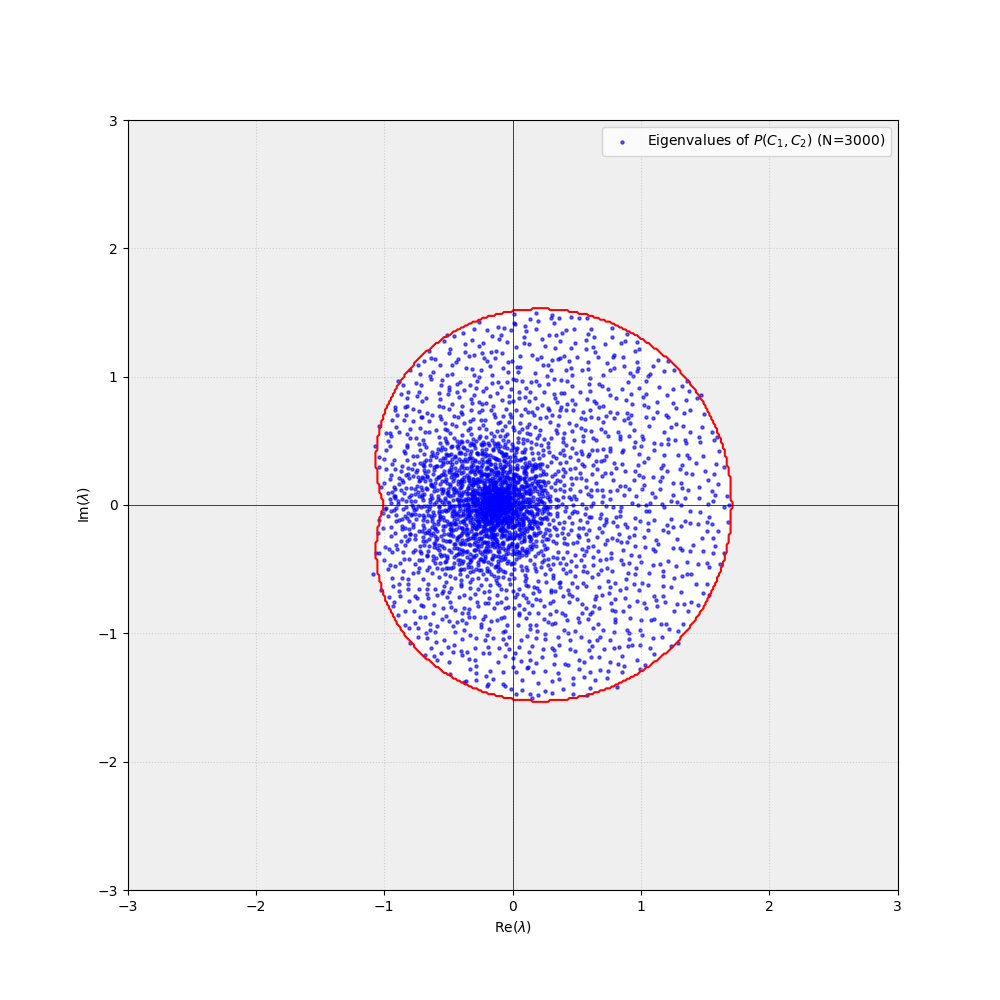}
    \caption{$P=\frac{1}{2}\sum_{i,j=1}^2c_ic_j+\frac{1}{2}\sum_{i=1}^2 c_i$}
  \end{subfigure}
  \hfill 
  \begin{subfigure}{0.48\columnwidth}
    \centering
    \includegraphics[width=\textwidth]{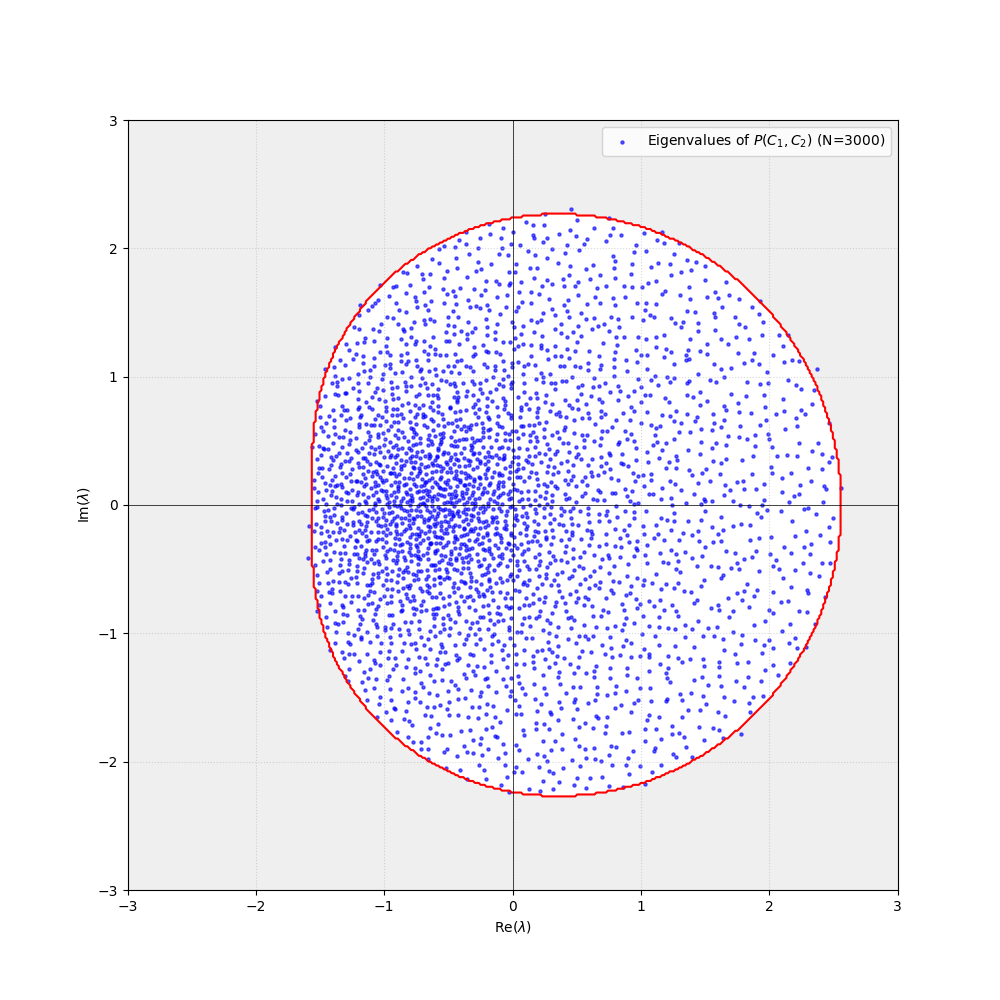}
    \caption{$P=c_1c_2+c_2c_1+c_1+c_2$}
  \end{subfigure}

  \vspace{1em} 
  
  \begin{subfigure}{0.48\columnwidth}
    \centering
    \includegraphics[width=\textwidth]{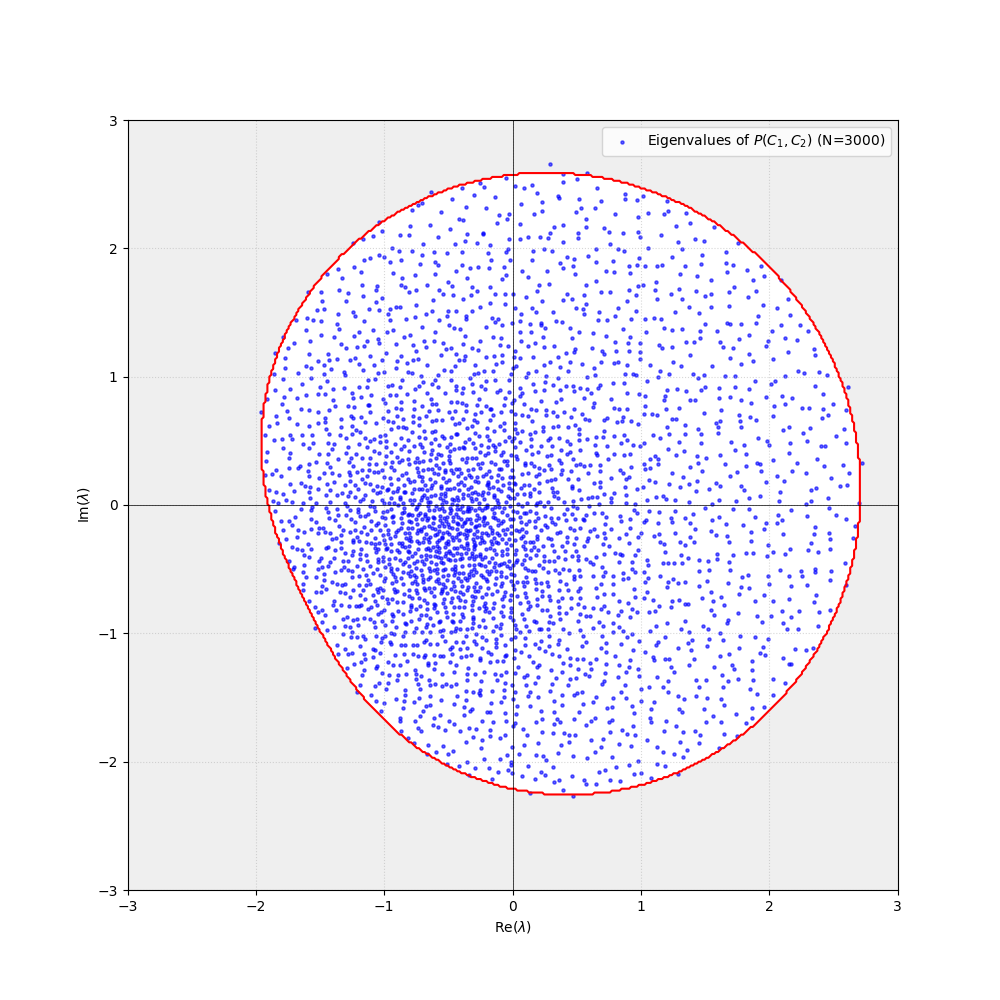}
    \caption{$P=c_1^2+c_2c_1-c_2^2+c_1+\sqrt{-1}c_2$}
  \end{subfigure}
  \hfill
  \begin{subfigure}{0.48\columnwidth}
    \centering
    \includegraphics[width=\textwidth]{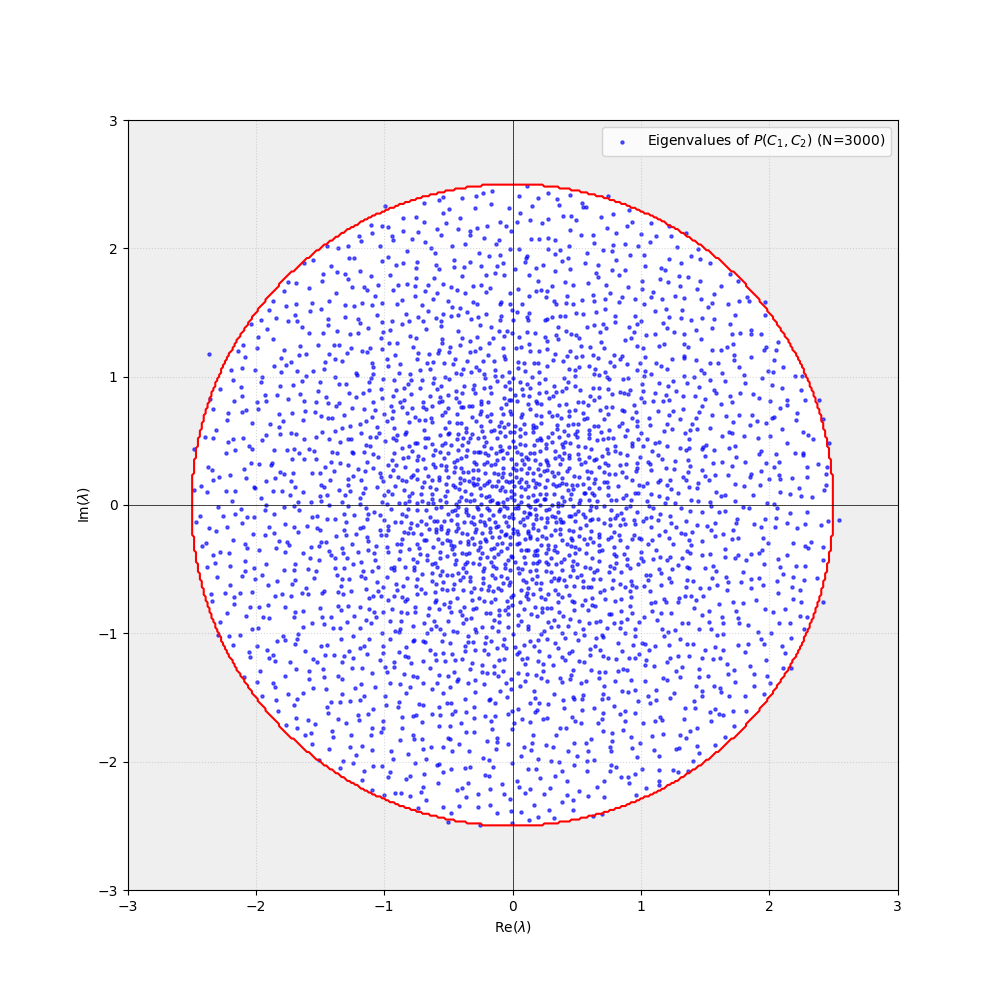}
    \caption{$P=\frac{{\sqrt{-1}}}{2}c_1^2+c_1c_2+2c_2c_1+c_2^2$}
  \end{subfigure}

   \caption{For each polynomial $P$, the area of $\lambda \in \mathbb{C}$ where the spectral radius $r(Q_{\lambda})<1$ is colored in gray, and the red line describes its boundary. The eigenvalues of corresponding Ginibre models of size $N=3000$ are plotted in blue.}
  \label{fig:four_images}
\end{figure}

\section{Preliminaries}\label{Preliminaries}
In this section, we recall fundamental tools for the study of rational functions in free semicircular operators. We refer \cite{MR4452070} for details.

Let $d \in \mathbb{N}$ and set $H=\mathbb{C}^d$. We consider the algebraic Fock space \[\mathcal{F}_{\mathrm{alg}}(H)=\mathbb{C}\Omega\oplus \bigoplus_{n=1}^{\infty} H^{\otimes n}\]
which is a finite linear span of the unit vector $\Omega$ and $H^{\otimes n}$ $(n\in \mathbb{N})$. Let $\mathcal{F}(H)$ denote the full Fock space, which is a completion of $\mathcal{F}_{\mathrm{alg}}(H)$ with respect to the usual tensor norm. For $\xi \in H$, we define the left and right creation operators $l(\xi), r(\xi)$ by
\begin{align*}
    l(\xi)(\xi_1\otimes \xi_2 \otimes \cdots \xi_n)&=\xi \otimes \xi_1\otimes \xi_2 \otimes \cdots \otimes \xi_n\\
    r(\xi)(\xi_1\otimes \xi_2 \otimes \cdots \xi_n)&=\xi_1\otimes \xi_2 \otimes \cdots \otimes \xi_n \otimes \xi.
\end{align*}
The adjoint operators $l^*(\xi)$ and $r^*(\xi)$ are called the left and right annihilation operators, and they satisfy
\begin{align*}
 l^*(\xi)\Omega &= 0,\\
     l^*(\xi)(\xi_1\otimes \xi_2 \otimes \cdots \xi_n)&=\langle\xi_1,\xi\rangle \xi_2 \otimes \cdots \otimes \xi_n,\\
     r^*(\xi)\Omega& = 0,\\
    r^*(\xi)(\xi_1\otimes \xi_2 \otimes \cdots \xi_n)&=\langle \xi_n , \xi\rangle \xi_1\otimes \xi_2 \otimes \cdots \otimes \xi_{n-1}. 
\end{align*}
Let $\{e_i\}_{i=1}^d$ be an orthonormal basis of $H=\mathbb{C}^d$ and set 
\[ s_i = l(e_i)+l^*(e_i).\]
We call the $d$-tuple $s=(s_1,\ldots,s_d)$ the standard free semicircular system. Actually, $s=(s_1,\ldots,s_d)$ has the same joint distribution as free semicircular elements  with respect to the vacuum state defined by (\cite[Section 2.6]{MR1217253})
\[ \tau(X)=\langle X\Omega , \Omega\rangle.\]

Let $\mathbb{C}\langle s \rangle$ denote the $\ast$-algebra generated by $s=(s_1,\ldots,s_d)$ and $L^{\infty}(s,\tau)$ denote the von Neumann algebra generated by $s$ (i.e. the closure of $ \mathbb{C}\langle s \rangle$ in weak operator topology), which is a $\mathrm{II}_1$-factor \cite[Theorem 2.6.2]{MR1217253} 
with the unique tracial state $\tau$. 
Let $L^2(s,\tau)$ be the completion of $\mathbb{C}\langle s \rangle$ with respect to the $L^2$-norm $\|X\|_2^2=\tau(X^*X)=\|X\Omega\|_{\mathcal{F}(H)}^2$ induced by $\tau$, which is a GNS Hilbert space of $L^{\infty}(s,\tau)$.
We remark that the linear map $\mathbb{C}\langle s \rangle \ni X \mapsto X\Omega \in \mathcal{F}_{\mathrm{alg}}(H) $ extends a Hilbert space isomorphism between $L^2(s,\tau)$ and $\mathcal{F}(H)$. 

 Since we will deal with unbounded operators, under the standard representation, we consider the set $L^0(s,\tau)$ of closed operators affiliated with $L^{\infty}(s,\tau)$ which is a $\ast$-algebra (\cite[Theorem XV]{MurrayVonNeumann1936}). 
We can identify $f \in L^2(s,\tau)$ with a (square-integrable) closed operator $L_f \in L^0(s,\tau)$ defined by the closure of the following densely defined closable operator (see, for example, \cite[Theorem 7.3.2]{AnantaramanPopa}):
\[ L_{f} (\xi)= R_{\xi}(f) , \quad \xi \in \widehat{L^{\infty}(s,\tau)}\subset L^2(s,\tau)\]
where $R_{\xi}$ denotes a right multiplication by $\xi$ (recall that $L^2(s,\tau)$ is a $L^{\infty}(s,\tau)$ -$L^{\infty}(s,\tau)$ bimodule).
Note that the domain of $L_{f}$ contains $\mathbb{C}\langle s \rangle $ since $ \mathbb{C}\langle s \rangle \subset L^{\infty}(s,\tau)$. Via the isomorphism $L^2(s,\tau)\cong \mathcal{F}(H)$, we often see $L_{f}$ as a closed operator on $\mathcal{F}(H)$ whose domain contains $\mathcal{F}_{\mathrm{alg}}(H)=\mathbb{C}\langle s \rangle \Omega$.

In this paper, we consider ``rational functions" in the standard semicircular system. One of the way to define ``rational functions" is algebraically taking a closure of the set of polynomials $\mathbb{C}\langle s \rangle$ so that the closure is closed under taking inverse. Such closure is called the division closure (see \cite[Definition 4.10]{MAI2023110016}). Let $C_{\mathrm{div}}(s)$ denote the division closure of $\mathbb{C}\langle s \rangle$ in $L^{\infty}(s,\tau)$, i.e., the smallest subalgebra of $L^{\infty}(s,\tau)$ which contains $\mathbb{C}\langle s \rangle$ and is closed under taking inverse (i.e., $f^{-1} \in C_{\mathrm{div}}(s)$ for $ f \in C_{\mathrm{div}}(s)$ if $f^{-1} \in L^{\infty}(s,\tau)$).
   Let $D(s)$ denote the division closure of $\mathbb{C}\langle s \rangle$ in $L^0(s,\tau)$, i.e., the smallest subalgebra of $L^0(s,\tau)$ which contains $\mathbb{C}\langle s \rangle$ is closed under taking inverse (i.e., $f^{-1} \in D(s)$ for $f \in D(s)$ if $f^{-1} \in L^0(s,\tau)$), which forms a skew field \cite{MAI2023110016}. The main idea comes from the following theorem proved by the author \cite{MR4452070}, which is a free semicircle analog of \cite{MR1452435} and \cite{MR1742860} for free Haar unitaries.
\begin{thm}[Theorem 3.1 and 4.5 in \cite{MR4452070}]\label{keythm}
For $f \in L^{\infty}(s,\tau)$, we have the following equivalence,
    \[ f \in C_{\mathrm{div}}(s) \Longleftrightarrow \{[r^*(e_i),f]\}_{i=1}^d \ \mathrm{are \ finite \ rank \ operators.}\]
    Moreover, we have
    \[ D(s)\cap L^{\infty}(s,\tau)=C_{\mathrm{div}}(s).\]
\end{thm}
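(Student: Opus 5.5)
The plan is to prove the two implications of the equivalence separately and then deduce $D(s)\cap L^{\infty}(s,\tau)=C_{\mathrm{div}}(s)$ from the criterion. The basic computation is the commutator of $r^*(e_i)$ with the generators. Right annihilation commutes with left creation on $H^{\otimes n}$ for $n\ge 1$. On the vacuum, $r^*(e_i)l(e_j)\Omega=\delta_{ij}\Omega$ while $l(e_j)r^*(e_i)\Omega=0$. Also $r^*(e_i)$ and $l^*(e_j)$ commute, as one checks on $\Omega$, on $H$ and on $H^{\otimes n}$ with $n\ge 2$. Hence
\[ [r^*(e_i),s_j]=\delta_{ij}P_{\Omega}, \]
where $P_{\Omega}$ is the rank-one projection onto $\mathbb{C}\Omega$.

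For ($\Rightarrow$), let $\mathcal{A}$ be the set of $f\in L^{\infty}(s,\tau)$ whose commutators $[r^*(e_i),f]$ are all finite rank. It is a unital algebra, by the Leibniz rule $[r^*(e_i),fg]=[r^*(e_i),f]g+f[r^*(e_i),g]$. It contains $\mathbb{C}\langle s\rangle$ by the computation above. It is closed under inversion in $L^{\infty}(s,\tau)$, since
\[ [r^*(e_i),f^{-1}]=-f^{-1}[r^*(e_i),f]f^{-1}. \]
Therefore $C_{\mathrm{div}}(s)\subset\mathcal{A}$.

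For ($\Leftarrow$), which is the main obstacle, I would build a linear realization of $f$ out of the finite-rank data. Let $V$ be the smallest subspace containing $f\Omega$ and invariant under the maps $\xi\mapsto r^*(e_i)\xi$, suitably intertwined with $f$. Concretely, I would use that $r^*(e_i)f\Omega=[r^*(e_i),f]\Omega$ lies in the finite-dimensional range of the commutator, and iterate the right-derivative operations $\partial_i$ defined by $r^*(e_i)(g\Omega)=(\partial_i g)\Omega$. The aim is a finite family $g_1,\dots,g_N\in L^{\infty}(s,\tau)$ with $g_1=f$ that is closed under these operations modulo scalars. Writing $g\Omega=\tau(g)\Omega+\sum_i r(e_i)\,r^*(e_i)g\Omega$ and transporting this to operators should give a linear system
\[ \Big(I-\sum_i A_i s_i\Big)G=c, \]
with scalar matrices $A_i$ and $G={}^t(g_1,\dots,g_N)$. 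Inverting this matrix over $L^{\infty}(s,\tau)$ gives $f$ as an entry of $(I-\sum A_is_i)^{-1}c$, which lies in $C_{\mathrm{div}}(s)$ by a Cramer/Schur complement argument.

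The delicate point is exactly this invertibility. The matrix $I-\sum_i A_is_i$ might only be injective, and I would first try to use the boundedness of the $g_k$ together with the factor property of $L^{\infty}(s,\tau)$ (injective implies invertible in $L^0$, combined with the relation above) to obtain boundedness of the inverse, possibly after passing to a minimal realization.

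For $D(s)\cap L^{\infty}(s,\tau)=C_{\mathrm{div}}(s)$, one inclusion is clear. For the other, take a bounded $f\in D(s)$ and show that the commutators $[r^*(e_i),f]$, computed first on the common domain $\mathcal{F}_{\mathrm{alg}}(H)$ via the same Leibniz and inverse formulas for affiliated operators, have finite rank. For this I would induct on the rational expression of $f$, checking that $\mathcal{F}_{\mathrm{alg}}(H)$-type domains are preserved. Since $f$ is bounded, the commutator is then a bounded finite-rank operator, and ($\Leftarrow$) applies.
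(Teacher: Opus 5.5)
The paper does not prove this statement; it imports it from \cite{MR4452070}. I therefore judge your argument on its own merits, using as a benchmark the machinery the paper does reproduce: Lemmas \ref{lemma2}--\ref{lemma3} and the Haagerup step, which it says follow Proposition 3.9 of that reference.

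\textbf{Direction ($\Rightarrow$).} Your argument here is correct and complete.

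\textbf{Direction ($\Leftarrow$).} The hypothesis, via $[r^*(e_i),f]e_v=\sum_u\alpha_{uiv^*}e_u$, only tells you that the Hankel matrix $(\alpha_{uv})_{u,v}$ has finite rank. Hence $\alpha_w={}^t\!\lambda B_w\gamma$ with $B_w:=B_{w_1}\cdots B_{w_n}$ for scalar matrices $B_i$. The right quotients $r^*(e_i)g\Omega$ are then mere vectors of $\mathcal{F}(H)$, and nothing in your sketch makes them bounded operators.

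Your linear system is also not the right one. On $L^2(s,\tau)$, right multiplication by $s_i$ is $R_{s_i}=r(e_i)+r^*(e_i)$. Eliminating $r(e_i)$ therefore produces the pencil $X=I+\sum_iB_i^2-\sum_iB_is_i$, whose inverse is the generating series $\sum_wB_wU_w$, not $I-\sum_iA_is_i$.

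The real gap is the one you flag, and your proposed remedies do not close it. A bounded solution of one linear system does not make $X$ invertible in $M_N(L^{\infty}(s,\tau))$. Injectivity only yields an inverse in $M_N(L^0(s,\tau))$, and factoriality is irrelevant. The missing idea is quantitative:
\begin{enumerate}
\item Finite Hankel rank plus square-summability gives $\sum_{|w|=n}|\alpha_w|^2\le MA^n$ with $A<1$. The proofs of Lemmas \ref{lemma2}--\ref{lemma3} use only finite-dimensionality of the span of right quotients, so they apply here.
\item In a minimal realization, $B_w=P^{-1}(\alpha_{u_kwv_l})_{k,l}Q^{-1}$ for finitely many fixed words $u_k,v_l$. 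Here $P$ (rows ${}^t\!\lambda B_{u_k}$) and $Q$ (columns $B_{v_l}\gamma$) are invertible. Hence $\sum_{|w|=n}\|B_w\|_{HS}^2$ also decays exponentially.
\item The Haagerup inequality then makes $G=\sum_wB_wU_w$ converge in operator norm.
\item The recursion $s_iU_w=U_{iw}+\delta_{i,w_1}U_{w_2\cdots w_n}$ gives $XG=I$. So $G=X^{-1}$ in the finite algebra $M_N(L^{\infty}(s,\tau))$, and $f={}^t\!\lambda G\gamma$ because both have the same vector at $\Omega$.
\end{enumerate}
Finally, there is no Cramer rule here, and a Schur complement needs an invertible pivot, which invertibility of $X$ does not supply. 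The step does work for $X^*X\ge\|X^{-1}\|^{-2}$. Its diagonal entries and successive Schur complements are bounded below, hence invertible in $L^{\infty}(s,\tau)$ and so in $C_{\mathrm{div}}(s)$. Then $X^{-1}=(X^*X)^{-1}X^*$.

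\textbf{The inclusion $D(s)\cap L^{\infty}(s,\tau)\subset C_{\mathrm{div}}(s)$.} Your argument fails as proposed, because domains like $\mathcal{F}_{\mathrm{alg}}(H)$ are not preserved under inversion in $L^0(s,\tau)$. For example, $s_1^{-1}\in D(s)$, but $\Omega\notin\mathrm{dom}(s_1^{-1})$: the semicircle density is positive at $0$, so $\|s_1^{-1}\Omega\|^2$ would be $\int x^{-2}\,d\sigma(x)=\infty$. Thus $[r^*(e_1),s_1^{-1}]$ is not even defined on $\mathcal{F}_{\mathrm{alg}}(H)$, and the formula $[r^*(e_i),g^{-1}]=-g^{-1}[r^*(e_i),g]g^{-1}$ has no meaning there.

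A bounded $f\in D(s)$ is given to you through some rational expression that may pass through such elements, e.g.\ $f=(s_1^{-1}+\sqrt{-1})^{-1}$. The existence of an expression whose inversions are all of elements invertible in $L^{\infty}(s,\tau)$ is precisely the assertion $f\in C_{\mathrm{div}}(s)$. An induction along the given expression cannot get around this. The device of Lemma \ref{lemma1}, multiplying by $(1+ff^*)^{-1}$, does not help either, since it presupposes this very equality.

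What is needed is a form of the criterion that handles unbounded elements of $D(s)$ directly, in the spirit of Linnell's rationality criterion for unbounded operators \cite{MR1742860}, which the paper names as the model. Alternatively, you need some other mechanism that transfers a bounded element of $D(s)$ to data in $C_{\mathrm{div}}(s)$.
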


\section{Proof of Main theorem}
In this section, we prove Theorem \ref{Main}. We start with $f\in D(s) \cap L^2(s,\tau)$. The following lemma is the first step to prove $f \in L^{\infty}(s,\tau)$.
\begin{lm}\label{lemma1}
   For $f\in D(s) \cap L^2(s,\tau)$, $[r^*(e_i),f]\mathcal{F}_{\mathrm{alg}}(H)$ is finite dimensional for all $i\in [d]$.
\end{lm}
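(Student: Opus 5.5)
Since $f\in L^2(s,\tau)$, it acts as the closed operator $L_f$ on $\mathcal{F}(H)$, and $\mathcal{F}_{\mathrm{alg}}(H)$ lies in its domain. Hence $[r^*(e_i),f]$ makes sense on $\mathcal{F}_{\mathrm{alg}}(H)$: the operator $r^*(e_i)$ preserves $\mathcal{F}_{\mathrm{alg}}(H)$, and $r^*(e_i)$ is bounded, so it can be applied after $L_f$.

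The key computation concerns $\xi = P\Omega$ with $P\in\mathbb{C}\langle s\rangle$. In that case $L_f(P\Omega)=R_P f = f\cdot P$, the $L^2$-vector $fP$.

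I would prove the statement by induction along the construction of $D(s)$, in the spirit of the proof of Theorem \ref{keythm} in \cite{MR4452070}. The inductive property on $f\in D(s)$ would be: there is a finite dimensional subspace $V_f\subset L^0(s,\tau)$ such that $[r^*(e_i),f]\xi \in V_f$ for all $\xi$ in a suitable common domain. The common domain should be $r$-invariant and contain $\mathcal{F}_{\mathrm{alg}}(H)$ whenever $f\in L^2$.

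The formal derivation rules would be the following.
\begin{itemize}
\item For generators, $[r^*(e_i),s_j]=\delta_{ij}P_\Omega$, which is rank one, since $r^*(e_i)$ commutes with left creation and annihilation except at the vacuum.
\item For products, $[r^*,fg]=[r^*,f]g+f[r^*,g]$.
\item For inverses, $[r^*,f^{-1}]=-f^{-1}[r^*,f]f^{-1}$.
\end{itemize}
All of these have finite rank, as in the bounded case. The difficulty is making them meaningful for unbounded affiliated operators.

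To avoid domain issues, I would rephrase everything on the level of $L^2(s,\tau)$ using right multiplications. One has $r^*(e_i)$ acting on $X\Omega$ as the "right free difference quotient" type map: $r^*(e_i)(P\Omega)=(1\otimes\tau)\partial_i^{R}(P)\Omega$ for a suitable derivation $\partial_i$.

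With this, $[r^*(e_i),f]P\Omega = r^*(e_i)(fP\Omega)-f\,r^*(e_i)P\Omega$. Using the Leibniz rule for the derivation extended to $D(s)$, which is possible because derivations extend uniquely to the division closure as in \cite{MAI2023110016}, this equals $\sum_k \tau(\ldots)\,a_k$ for finitely many fixed elements $a_k$ coming from $\partial_i f\in D(s)\otimes D(s)$. More precisely, if $\partial_i f=\sum_k a_k\otimes b_k$ is a finite sum, the commutator applied to $P\Omega$ becomes $\sum_k \langle\cdot\rangle a_k\Omega$. Hence the range lies in $\mathrm{span}\{a_k\Omega\}$, which is finite dimensional.

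The main obstacle is justifying that $\partial_i f$ is a finite tensor in $D(s)\otimes D(s)$, and that the pairing produces vectors in $L^2$ rather than merely in $L^0$. The first point holds for every element of the skew field by the inverse rule. For the second, I would first try to use the fact that the left side $[r^*(e_i),f]P\Omega$ is genuinely in $\mathcal{F}(H)$, since both terms are. The finitely many candidates $a_k$ then span a finite dimensional space of affiliated operators, and intersecting with $L^2$ still gives finite dimension. So finite dimensionality holds regardless of integrability, as long as the identity holds in $L^0$, where it follows from the algebraic rules.
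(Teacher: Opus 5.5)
Your proposal has a genuine gap at the step that carries the proof. That step is the claimed identity
\[
[r^*(e_i),f]P\Omega=\sum_k c_k(P)\,a_k,\qquad \partial_i f=\sum_k a_k\otimes b_k\in D(s)\otimes D(s),
\]
which you say ``follows from the algebraic rules'' in $L^0(s,\tau)$. It does not.

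\begin{itemize}
\item $r^*(e_i)$ is a bounded operator on $\mathcal{F}(H)$ that is not in $L^\infty(s,\tau)$: its commutator with $s_i$ is the rank-one $P_\Omega$, which cannot lie in a $\mathrm{II}_1$ factor. So no computation inside the algebra $L^0(s,\tau)$ ever produces the vector $r^*(e_i)(fP\Omega)$.
\item Your only bridge, $r^*(e_i)X\Omega=((1\otimes\tau)\partial_iX)\Omega$, is proved for polynomials. Transporting it to $f\in D(s)$ would need one of two things. The first is continuity of the algebraic (Cohn) extension of $\partial_i$ in a topology that controls $L^2$ vectors, and none is available. The second is an induction on the construction of $f$ that checks the Leibniz and inverse rules for $[r^*(e_i),\cdot\,]$ on unbounded intermediate elements.
\item That induction needs domain control you never supply. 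For $f=g^{-1}$ you would need $r^*(e_i)f\xi\in\mathrm{dom}(g)$. The ``suitable common domain'' in your inductive hypothesis is never specified, and intermediate elements need not be in $L^2$, so $\mathcal{F}_{\mathrm{alg}}(H)$ need not lie in their domains.
\item The coefficients $c_k(P)$, which should be $\tau(b_kP)$, are undefined when $b_k$ is not integrable. So the identity cannot even be formulated, and the ``intersect with $L^2$'' step has nothing to act on.
\end{itemize}
Extending $\partial_i$ algebraically to the skew field is not where the difficulty lies. The missing piece is the link between that extension and the Hilbert-space commutator.

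The missing idea is to avoid unbounded operators altogether by passing to bounded elements of $D(s)$. Since $f^*\in D(s)$ and $D(s)$ is a skew field, $(1+ff^*)^{-1}$ and $(1+ff^*)^{-1}f$ lie in $D(s)$. They are bounded (of norms at most $1$ and $1/2$), so they lie in $D(s)\cap L^\infty(s,\tau)=C_{\mathrm{div}}(s)$. By Theorem~\ref{keythm}, their commutators with $r^*(e_i)$ are finite rank. On $\mathcal{F}_{\mathrm{alg}}(H)$ the identity
\[
[r^*(e_i),(1+ff^*)^{-1}f]=(1+ff^*)^{-1}[r^*(e_i),f]+[r^*(e_i),(1+ff^*)^{-1}]f
\]
is legitimate, for four reasons: one factor is bounded, $f\xi$ is defined for $\xi\in\mathcal{F}_{\mathrm{alg}}(H)$, $r^*(e_i)$ preserves $\mathcal{F}_{\mathrm{alg}}(H)$, and $((1+ff^*)^{-1}f)\xi=(1+ff^*)^{-1}(f\xi)$ there. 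Hence $(1+ff^*)^{-1}[r^*(e_i),f]\mathcal{F}_{\mathrm{alg}}(H)$ is finite dimensional. Injectivity of $(1+ff^*)^{-1}$ then gives the claim. This uses the already-proved bounded case instead of re-running the induction for affiliated operators.
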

\begin{proof}
Note that $[r^*(e_i),f]$ is well-defined on $\mathcal{F}_{\mathrm{alg}}(H)$ since $\mathcal{F}_{\mathrm{alg}}(H)$ is an invariant subspace for $r^*(e_i)$ and $f \in L^2(s,\tau)$. 
Since $D(s)$ is a skew field, $(1+ff^*)^{-1}f\in D(s) \cap L^{\infty}(s,\tau)=C_{\mathrm{div}}(s)$ by Theore \ref{keythm}. In particular, $[r^*(e_i),(1+ff^*)^{-1}f]$ is a finite rank operator for all $1\le i \le d$.
Since $f(\xi)\in \mathcal{F}(H)$ is well-defined for $\xi \in \mathcal{F}_{\mathrm{alg}}(H)$ and $(1+ff^*)^{-1}\in L^{\infty}(s,\tau)\subset B(\mathcal{F}(H))$, we have $ [(1+ff^*)^{-1}f](\xi)=(1+ff^*)^{-1}[f(\xi)].$ 
We also have the following identity on $\mathcal{F}_{\mathrm{alg}}(H)$ based on the Leibniz rule for commutators: 
    \[ [r^*(e_i),(1+ff^*)^{-1}f]=(1+ff^*)^{-1}[r^*(e_i),f]+[r^*(e_i),(1+ff^*)^{-1}]f.\]
   Note that $(1+ff^*)^{-1}\in D(s) \cap L^{\infty}(s,\tau)$ and $[r^*(e_i),(1+ff^*)^{-1}]$ is a finite rank operator. Thus, the image of $(1+ff^*)^{-1}[r^*(e_i),f]$ is also finite dimensional. By injectivity of $(1+ff^*)^{-1}$ (because $(1+ff^*)(1+ff^*)^{-1}=1$), the image of $[r^*(e_i),f]$ is also finite dimensional. 
\end{proof}
The remaining part is almost the same as the proof of Proposition 3.9 in \cite{MR4452070}, but we give a self-contained proof. Let $[d]^*$ be the free monoid generated by $[d]=\{1,\ldots,d\}$ and the identity (empty word) $\Omega$. For each word $w=w_1w_2\cdots w_n$ ($w_i \in [d]$), we set $|w|=n$. If $f$ belongs to $L^2(s,\tau)$, we can write $f(\Omega) = \sum_{w \in [d]^*}\alpha_w e_w$ where $\alpha_w \in \mathbb{C}$ with $\|f\|^2_2=\sum_{w \in [d]^*} |\alpha_w|^2<\infty$ and $e_w = e_{w_1}\otimes \cdots \otimes e_{w_n} \in H^{\otimes n}$ for $w=w_1\cdots w_n \in [d]^*$ ($e_{\Omega}:=\Omega$). We set $w^*=w_nw_{n-1}\cdots w_1$ for $w=w_1w_2\cdots w_n$. Note that $\{e_w\}_{w \in [d]^*}$ forms an orthonormal basis of $\mathcal{F}(H)$. 
\begin{lm}\label{lemma2}
  For $f\in D(s) \cap L^2(s,\tau)$ with $f(\Omega)=\sum_{w \in [d]^*}\alpha_w e_w$, a linear subspace $K\subset \mathcal{F}(H)$ spanned by \[\left\{\sum_{u \in [d]^*}|\alpha_{uv}|^2e_u \Bigg|\ v \in [d]^*\right\}\] is finite dimensional.
\end{lm}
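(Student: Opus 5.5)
The plan is to prove the statement in two stages: first a finite-dimensionality statement for the \emph{unsquared} coefficient vectors, and then a passage to the squared moduli by a Schur-product argument.

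For $v\in[d]^*$ set
\[\beta_v:=\sum_{u\in[d]^*}\alpha_{uv}e_u .\]
This vector lies in $\mathcal{F}(H)$ and has norm at most $\|f\|_2$. Equivalently,
\[\beta_v=r^*(e_{v_1})r^*(e_{v_2})\cdots r^*(e_{v_n})\,f(\Omega)\qquad (v=v_1\cdots v_n),\]
because $r^*(e_i)$ strips the last letter of each basis word. So the span of $\{\beta_v\}$ is the range of the Hankel-type map $\mathcal{H}_f\colon e_{v^*}\mapsto r^*(e_v)f(\Omega)$ from $\mathcal{F}_{\mathrm{alg}}(H)$ to $\mathcal{F}(H)$.

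\textbf{Main step: $\mathcal{H}_f$ has finite rank.} I would use Lemma \ref{lemma1} together with the right semicircular operators $d_j=r(e_j)+r^*(e_j)$.

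1. These $d_j$ commute with $L^\infty(s,\tau)$, and hence with $f$, on $\mathcal{F}_{\mathrm{alg}}(H)$; this is the statement that $f$ acts as a left multiplication. It follows that on $\mathcal{F}_{\mathrm{alg}}(H)$
\[[r(e_j),f]=-[r^*(e_j),f]=:-T_j,\]
and by Lemma \ref{lemma1} the range of $T_j$ is a finite-dimensional space $V_j$. Put $V=\sum_j V_j$.

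2. I then compare $r^*(e_v)f(\Omega)$ with the vectors $f(e_u)$. Moving $r^*(e_i)$ past $f$ costs one term from $V$, and the identity $r^*(e_i)r(e_j)=\delta_{ij}$ lets me cancel letters. Since $f(\Omega)=f(r^*(e_i)e_i)$, this gives the first case
\[r^*(e_i)f(\Omega)\in V \quad\text{(using $r^*(e_i)\Omega=0$).}\]

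3. For longer words, I would write $f(\Omega)=r^*(e_v)\,r(e_{v^*})$-type expressions and commute step by step. This should express $r^*(e_v)f(\Omega)$ as a finite combination of vectors of the form $r^*(e_{v'})T_i(e_{u})$, with $v'$ a suffix of $v$.

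4. Then I would show that the space $W:=\sum_{v}r^*(e_v)V$ is finite-dimensional. The idea is to take $V$ to be the image under $(1+ff^*)^{-1}$-corrections of the finite-rank commutators of the bounded element $g=(1+ff^*)^{-1}f\in C_{\mathrm{div}}(s)$, as in the proof of Lemma \ref{lemma1}. For elements of $C_{\mathrm{div}}(s)$, Theorem \ref{keythm} gives finite-rank commutators, and I expect their ranges to be invariant under all $r^*(e_i)$. This mirrors the proof of Proposition 3.9 in \cite{MR4452070}.

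I expect this step — controlling the iterated $r^*$'s applied to the commutator ranges, so that the rank does not grow with $|v|$ — to be the main obstacle. If the direct invariance fails, the first fallback is to work with $g$ in place of $f$ and then transfer back using injectivity of $(1+ff^*)^{-1}$, exactly as in Lemma \ref{lemma1}.

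\textbf{Passage to squared moduli.} Suppose $\mathrm{span}\{\beta_v\}$ is contained in $\mathrm{span}\{b_1,\dots,b_k\}$. Write $\beta_v=\sum_a c_{v,a}b_a$, and let $\circ$ denote the coordinatewise (Schur) product in the basis $\{e_w\}$. Then
\[\sum_u|\alpha_{uv}|^2e_u=\beta_v\circ\overline{\beta_v}=\sum_{a,b}c_{v,a}\overline{c_{v,b}}\;b_a\circ\overline{b_b}.\]
Each vector $b_a\circ\overline{b_b}$ has absolutely summable coefficients, and it lies in $\mathcal{F}(H)$ since its coefficients are square-summable. Hence $K\subset\mathrm{span}\{b_a\circ\overline{b_b}\}$, so $\dim K\le k^2$, which proves the lemma.
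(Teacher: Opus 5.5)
\textbf{There is a genuine gap in the main step.} Your two-stage structure is the paper's. First show that the unsquared vectors $\beta_v$ span a finite-dimensional space, then pass to $|\alpha_{uv}|^2$. Your Schur-product stage is essentially the paper's argument verbatim; its $g_{ij}$ are your $b_a\circ\overline{b_b}$. What is missing is a proof that $\mathrm{span}\{\beta_v\}$ is finite-dimensional:
\begin{itemize}
\item Steps 3 and 4 are stated only as expectations (``this should express'', ``I expect their ranges to be invariant'').
\item The fallback through $g=(1+ff^*)^{-1}f$ and Theorem~\ref{keythm} is not carried out.
\item That fallback does not obviously transfer back to $f$. The injectivity trick of Lemma~\ref{lemma1} controls the dimension of the range of $[r^*(e_i),f]$. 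It says nothing about iterated $r^*$'s applied to that range, which is exactly what your $W$ needs.
\end{itemize}

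\textbf{The missing identity.} The missing ingredient is
\[[r^*(e_i),f]e_v=\sum_{u}\alpha_{uiv^*}e_u,\]
which the paper quotes from \cite[Lemma 3.3]{MR4452070}. In your notation it reads $\beta_{iv^*}=T_ie_v$. With it, $\mathrm{span}\{\beta_w:\ w\neq\Omega\}\subseteq\sum_i T_i\mathcal{F}_{\mathrm{alg}}(H)$, which is finite-dimensional by Lemma~\ref{lemma1}. The vector $\beta_\Omega=f(\Omega)$ adds at most one dimension. No control of rank growth in $|v|$ is needed.

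\textbf{Your step 1 already yields it.} On $\mathcal{F}_{\mathrm{alg}}(H)$ you showed $f\,r(e_i)=r(e_i)f+T_i$. Combining this with $r^*(e_j)r(e_i)=\delta_{ij}$ gives, for $\xi\in\mathcal{F}_{\mathrm{alg}}(H)$,
\[T_j\,r(e_i)\xi=r^*(e_j)f\,r(e_i)\xi-f\,r^*(e_j)r(e_i)\xi=\delta_{ij}f(\xi)+r^*(e_j)T_i\xi-\delta_{ij}f(\xi)=r^*(e_j)T_i\xi .\]
So $V=\sum_iT_i\mathcal{F}_{\mathrm{alg}}(H)$ is invariant under every $r^*(e_j)$, and in fact $W=V$. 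Starting from $\beta_i=T_i\Omega$, induction gives $\beta_{v_1\cdots v_n}=r^*(e_{v_1})T_{v_2}e_{v_n\cdots v_3}=T_{v_1}e_{v_n\cdots v_2}$, which is the paper's formula. Adding this two-line computation in place of steps 3 and 4 closes the gap. As written, the proposal stops at the one step that carries the content.
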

\begin{proof}
    By using the formula in \cite[Lemma 3.3]{MR4452070}, we have
   \[ [r^*(e_i),f]e_v= \sum_{u \in [d]^*}\alpha_{uiv^*}e_u .\]
   Lemma \ref{lemma1} tells us the linear subspace $K'$ spanned by \[\left\{\sum_{u \in [d]^*}\alpha_{uv}e_u \Bigg|\ v \in [d]^*\right\}\] is finite dimensional. 
   Let $g_i=\sum_{w \in [d]^*} \beta^i_w e_w$ ($i=1,\ldots,k$) be a basis of $K'$. We set 
   \[ g_{ij}=\sum_{w \in [d]^*}\beta^i_w \overline{\beta^j_w} e_w\in \mathcal{F}(H)\]
   for $i,j \in [k]$. Then, for each $v \in [d]^*$, there are $\{c_i\}_{i=1}^k \subset \mathbb{C}$ such that
   \[\sum_{u \in [d]^*}\alpha_{uv}e_u=  \sum_{u \in [d]^*} \left(\sum_{i=1}^k c_i\beta^i_u\right) e_u.\]
   Since $\{e_u\}_{u\in [d]^*}$ is an orthonormal basis, by comparing coefficients, we have
   \[\alpha_{uv}=\sum_{i=1}^k c_i\beta^i_u.  \]
   By applying this identity to $|\alpha_{uv}|^2$, we obtain
   \begin{align*}
       \sum_{u \in [d]^*}|\alpha_{uv}|^2e_u &=\sum_{u \in [d]^*} \left(\sum_{i=1}^k c_i\beta^i_u\right)\overline{\left(\sum_{j=1}^k c_j\beta^j_u\right)} e_u\\
       &=\sum_{i,j=1}^k c_i\overline{c_j} \sum_{u \in [d]^*}\beta^i_u \overline{\beta^j_u} e_u \\
       &=\sum_{i,j=1}^k c_i\overline{c_j} g_{ij}.
   \end{align*}
   Therefore, the linear subspace $K$ is spanned by $\{g_{ij}\}_{i,j\in[k]}$, and $K$ is finite-dimensional.
    \end{proof}

\begin{lm}\label{keylemma}
    Let $f=\sum_{n=0}^{\infty}a_n e_n\in c_0(\mathbb{Z}_{\ge 0})$. If the linear subspace $K$ of $c_0(\mathbb{Z}_{\ge 0})$ spanned by $\sum_{n=0}^{\infty}a_{n+m} e_n$ for all $m \in \mathbb{Z}_{\ge 0}$ is finite dimensional, then we have
    \begin{enumerate}
     \item There exists $k\in \mathbb{N}$, $B\in M_k(\mathbb{C})$, and $\lambda,\gamma \in \mathbb{C}^k$ such that we have for any $n \in \mathbb{Z}_{\ge 0}$,
     \[ a_n={}^t\!\lambda B^n \gamma \]
     \item There exists $M>0$ and $0<A<1$ such that 
     \[ |a_n|\le MA^n.\]
    \end{enumerate}
\end{lm}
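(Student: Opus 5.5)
Let $S$ denote the left shift on $c_0(\mathbb{Z}_{\ge 0})$, $S(\sum_n x_n e_n)=\sum_n x_{n+1}e_n$, and write $f_m=S^m f=\sum_{n} a_{n+m}e_n$. The hypothesis says exactly that the $S$-cyclic subspace $K=\mathrm{span}\{S^m f\}_{m\ge 0}$ is finite dimensional, say $\dim K=k$. Since $S f_m=f_{m+1}$, the subspace $K$ is $S$-invariant, so $S|_K$ is a linear operator on the $k$-dimensional space $K$. The plan is to read off (1) from a matrix representation of $S|_K$, and then obtain (2) from the spectral information forced by $f\in c_0$.

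For (1), I will fix a basis $g_1,\dots,g_k$ of $K$ and let $B\in M_k(\mathbb{C})$ be the matrix of $S|_K$ acting on coordinate column vectors. Let $\gamma\in\mathbb{C}^k$ be the coordinate vector of $f$, and let $\lambda\in\mathbb{C}^k$ be defined by $\lambda_j=\langle g_j,e_0\rangle$, the $0$-th entry of $g_j$. Then $f_n=S^nf$ has coordinate vector $B^n\gamma$, and $a_n$ is the $0$-th entry of $f_n$. This gives $a_n={}^t\!\lambda B^n\gamma$. (The cyclic basis $f,Sf,\dots,S^{k-1}f$ would be a concrete choice, making $B$ a companion matrix.)

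For (2), it suffices to show that every eigenvalue $\mu$ of $B$ satisfies $|\mu|<1$; that is, the spectral radius of $S|_K$ is less than $1$. Granting this, $\|B^n\|\le M A^n$ for any $A$ with $r(B)<A<1$, by the Jordan form or Gelfand's formula, and hence $|a_n|\le |\lambda|\,|\gamma|\,M A^n$. To get the eigenvalue bound, I will argue as follows. If $\mu$ is an eigenvalue of $S|_K$, there is a nonzero $g\in K\subset c_0$ with $Sg=\mu g$. Writing $g=\sum g_n e_n$, this means $g_{n+1}=\mu g_n$, so $g_n=\mu^n g_0$ with $g_0\ne 0$. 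The requirement $g\in c_0$ then forces $|\mu|<1$. The case $\mu=0$ is harmless: nilpotent parts only contribute finitely many terms.

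The main obstacle is this last step, namely making sure eigenvectors of the restricted operator really lie in $c_0$. They do, because $K$ is spanned by shifts of $f\in c_0$, and $c_0$ is shift invariant. With that settled, the remaining estimate from the Jordan form is routine, absorbing polynomial factors $n^j$ into a slightly larger $A<1$.
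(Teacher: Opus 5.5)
Your proof is correct. Part (1) is essentially the paper's argument: the matrix of the shift restricted to the finite-dimensional invariant subspace $K$, with the roles of $\lambda$ and $\gamma$ swapped, which is immaterial.

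For part (2) you take a genuinely shorter route. The paper proceeds in three steps:
\begin{enumerate}
\item It shows the realization is minimal, namely ${}^t\!\lambda\,\mathbb{C}[B]={}^t\!\mathbb{C}^k$ and $\mathbb{C}[B]\gamma=\mathbb{C}^k$. The second identity needs a density/duality argument in $c_0$.
\item It deduces that each entry of $B^n$ is a fixed linear combination of finitely many shifted terms $a_{n+x}$, so $B^n\to 0$.
\item Only then does it read off from the Jordan form that every eigenvalue of $B$ has modulus less than $1$.
\end{enumerate}
You instead note that an eigenvector of $S|_K$ lies in $K\subset c_0$. It must therefore be the geometric sequence $g_0(1,\mu,\mu^2,\dots)$ with $g_0\neq 0$, which forces $|\mu|<1$ at once. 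This bypasses the controllability/observability step entirely and uses nothing beyond shift-invariance of $c_0$.

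What the paper's route buys is the stronger structural fact that the realization is minimal. Decay of $(a_n)$ then transfers to the matrix powers $B^n$ themselves; this is the standard minimal-realization argument for rational sequences. Yours is the more economical proof of the stated lemma.

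Your final estimate is also slightly more careful than the paper's. You choose $A$ strictly between $\max\{r(B),0\}$ and $1$, which correctly absorbs the polynomial factors $\binom{n}{j}$ coming from nontrivial Jordan blocks. The paper takes $A$ equal to the largest modulus of an eigenvalue of $B$. That choice does not absorb those factors, and it can be $0$, violating $0<A<1$.
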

\begin{proof}
Let $g_1,\ldots,g_k$ be a basis of $K$ and $l,l^*$ be the shift operators on $c_0(\mathbb{Z}_{\ge 0})$ defined by $le_n=e_{n+1}$, $l^*e_0=0$ and $l^* e_n=e_{n-1}$ for $n \in \mathbb{N}$. For $f \in c_0(\mathbb{Z}_{\ge 0})$ and $n \in \mathbb{Z}_{\ge 0}$, we define $\langle f, e_n\rangle $ by the $n$-th component of $f$. Note that $l^{*m}f=\sum_{n=0}^{\infty} a_{n+m} e_n$. Since $K$ is an invariant subspace of $l^*$, there exists a matrix $B=(b_{ij})\in M_k(\mathbb{C})$ such that
    \[ l^*g_i=\sum_{j=0}^k b_{ij} g_j.\]
    We can also take $\lambda={}^t\! (c_1,\ldots,c_k)\in \mathbb{C}^k$ so that
    \[f=\sum_{i=1}^k  c_i g_i. \]
Then, we have
\begin{align*}
    a_n&=\langle l^{*n} f, e_0\rangle\\
    &=\sum_{i=0}^k c_i  \langle l^{*n}g_{i},e_0\rangle \\
    &= \sum_{i=0}^k \sum_{j_1=0}^k c_i b_{ij_1}\langle l^{*n-1}g_{j_1},e_0\rangle\\
    &=\cdots\\
    &=\sum_{i=0}^k \sum_{j_1,\ldots,j_n=0}^k c_i b_{ij_1}b_{j_1j_2}\cdots b_{j_{n-1}j_n} \langle g_{j_n},e_0\rangle\\
    &={}^t\!\lambda B^n \gamma
\end{align*}
where $\gamma={}^t\!(\langle g_1,e_0\rangle,\ldots,\langle g_k,e_0\rangle)\in \mathbb{C}^k$.
Now, we show 
\[ {}^t\! \lambda \mathbb{C}[B]={}^t\!\mathbb{C}^k, \ \mathbb{C}[B]\gamma=\mathbb{C}^k\]
where $\mathbb{C}[B]$ is the set of polynomials in $B$. For the first claim, we take $P_i$ such that $P_i(l^*)f=g_i$ (recall $g_i \in K$). Then, we have
\begin{align*}
\sum_{j=1}^k [{}^t\!\lambda P_i(B)]_j g_j&=\sum_{l=1}^k c_lP_i(B)_{lj}g_j\\
&=\sum_{l=1}^k c_l P_i(l^*)g_l\\
&=P_i(l^*)f\\
&=g_i.
\end{align*}
Since $\{g_i\}_{i=1}^{k}$ is a basis, we can see $\{{}^t\! \lambda P_i(B)\}_{i=1}^k$ is the canonical basis of ${}^t\! \mathbb{C}^k$.
On the other hand, for a polynomial $P$, we have
\[[P(B)\gamma]_i=\sum_{j=1}^k \langle P(B)_{ij}g_j,e_0\rangle=\langle P(l^*)g_i,e_0\rangle\]
Let $q={}^t\!(q_1,\ldots,q_k)$ be in the orthogonal complement of $\mathbb{C}[B]\gamma$ in $\mathbb{C}^k$. Then, we have
\[\sum_{i=1}^k \overline{q_i} \langle  P(l^*)g_i,e_0\rangle=  \left\langle \sum_{i=1}^k \overline{q_i} g_i,P^*(l)e_0\right\rangle=0\]for any polynomial $P$. Since $\{Pe_0|P\in\mathbb{C}[l]\}$ is dense in $c_0(\mathbb{Z}_{\ge 0})$, we get $\sum_{i=1}^k \overline{q_i} g_i=0$ and thus $q=0$. This implies that there are polynomials $Q_1,\ldots,Q_k$ such that $\{Q_i(B)\gamma\}_{i=1}^k$ is the canonical basis of $\mathbb{C}^k$. By using polynomials $\{P_i\}_{i=1}^k,\{Q_i\}_{i=1}^k$ which we obtained, we have
\[[B^n]_{ij}={}^t\!\lambda P_i(B)B^nQ_j(B) \gamma,\]
which implies that there are $L,M\in \mathbb{N}$ and constants $c_{ij}^{lm}\in \mathbb{C}$ and $x_l,y_m \in \mathbb{Z}_{\ge 0}$  ($1\le i,j\le k$, $1\le l\le L$, $1\le m \le M$) such that 
\[[B^n]_{ij}=\sum_{l,m}c_{ij}^{lm}a_{n+x_l+y_m}.\]

Let $B=R^{-1}\Lambda R$ be the Jordan standard form of $B$. Since $\Lambda^n=RB^nR^{-1}$, by the previous argument, each entry of $\Lambda^n$ is spanned by $\{a_{n+x_l+y_m}|1\le l\le L, \ 1\le m \le M\}$ whose linear coefficients are independent from $n$. By assumption, we have $\lim_{n\to \infty}a_{n+x_l+y_m}=0$, and all entries of $\Lambda^n$ converge to $0$ as $n\to \infty$. Let $\{\lambda_i\}_{1\le i\le k}$ be the set of  eigenvalues of $B$. Since $\lambda_i^n$ is an entry of $\Lambda^n$, we have $\lim_{n\to \infty}\lambda_i^n=0$, and $\lambda_i$ must satisfy $|\lambda_i|<1$. Since $a_n={}^t\! \lambda R^{-1} \Lambda^n R \gamma$, $a_n$ is spanned by $\{\binom{n}{j}\lambda_i^{n-j}|1\le i,j \le k\}$, we can find $M>0$ such that $|a_n|\le MA^n$ with $A=\max_{1\le i \le k}{|\lambda_i|}<1$. 
\end{proof}

 \begin{lm}\label{lemma3}
  For $f\in D(s) \cap L^2(s,\tau)$ with $f(\Omega)=\sum_{w \in [d]^*}\alpha_w e_w$, there exists $M>0$ and $0<A<1$ such that, for any $n \in \mathbb{Z}_{\ge 0}$, we have
   \[ \sum_{|w|=n}|\alpha_w|^2 \le MA^n. \]
    \end{lm}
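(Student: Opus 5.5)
The plan is to reduce the statement to Lemma \ref{keylemma}, using the finite-dimensional space $K$ produced by Lemma \ref{lemma2}. Set $a_n=\sum_{|w|=n}|\alpha_w|^2$. Since $f\in L^2(s,\tau)$, we have $\sum_n a_n=\|f\|_2^2<\infty$, so $a=\sum_n a_ne_n\in c_0(\mathbb{Z}_{\ge 0})$. It then suffices to check that the span of the shifted sequences $\sum_n a_{n+m}e_n$ ($m\ge 0$) is finite dimensional. Lemma \ref{keylemma}(2) will then give exactly $a_n\le MA^n$ with $0<A<1$.

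To obtain this finite dimensionality, we "sum out" the word structure. Let $\Phi:\mathcal{F}(H)\to c_0(\mathbb{Z}_{\ge0})$ be the linear map $\sum_w \beta_w e_w\mapsto \sum_n\bigl(\sum_{|u|=n}\beta_u\bigr)e_n$. This map needs some care, since it is not bounded on $\mathcal{F}(H)$. We therefore apply it only to vectors with absolutely summable coefficients: the vectors $h_v=\sum_u|\alpha_{uv}|^2e_u$ have $\ell^1$-coefficients, and so do the $g_{ij}$ of Lemma \ref{lemma2}, by Cauchy--Schwarz. Hence we define $\Phi$ on the $\ell^1$-type subspace, where it is well defined and linear. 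Its image of $K$ is then a finite-dimensional subspace $\Phi(K)\subset \ell^1\subset c_0$.

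Next we express the shifts of $a$ through $\Phi$. For $m\ge 0$,
\[
a_{n+m}=\sum_{|w|=n+m}|\alpha_w|^2=\sum_{|v|=m}\sum_{|u|=n}|\alpha_{uv}|^2,
\]
where each word of length $n+m$ is split uniquely as $uv$. It follows that $\sum_n a_{n+m}e_n=\sum_{|v|=m}\Phi(h_v)$. This is a finite sum, since there are $d^m$ words $v$, so it lies in $\Phi(K)$. The span of all shifts is therefore contained in $\Phi(K)$, hence finite dimensional, and Lemma \ref{keylemma} applies.

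The main obstacle is only the technical point that $\Phi$ is not defined on all of $\mathcal{F}(H)$. This is settled by noting that every vector involved has nonnegative or absolutely summable coefficients. Concretely, $\sum_u|\beta^i_u\overline{\beta^j_u}|\le\|g_i\|\|g_j\|$, so $K$ sits inside the $\ell^1$-coefficient space, and the linear relations from Lemma \ref{lemma2} are compatible with $\Phi$ there. The remaining steps are bookkeeping.
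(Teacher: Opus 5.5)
Your proposal is correct and follows essentially the same route as the paper. Like the paper, you push the finite-dimensional space $K$ of Lemma \ref{lemma2} forward under the length-summing map on $\ell^1$-coefficient vectors, note that each shift $\sum_n a_{n+m}e_n=\sum_{|v|=m}\Phi(h_v)$ lies in the image, and apply Lemma \ref{keylemma}; your caution that $\Phi$ is defined only on $\ell^1$-coefficient vectors matches the paper's choice to view $K$ inside $l^1([d]^*)$.
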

    \begin{proof}
Since the sequence $(|\alpha_{uv}|^2)_{u\in [d]^*}$ is $l^1$-summable for any $v\in[d]^*$, we can see the linear subspace $K$ in Lemma \ref{lemma2} as a subspace of $l^1([d]^*)$. Through the linear map defined by
   \[ l^1([d]^*)\ni (\beta_w)_{w \in [d]^*} \mapsto \left( \sum_{|w|=n}\beta_w\right)_{n=0}^{\infty} \in l^1(\mathbb{Z}_{\ge 0}), \]
    $K$ is mapped into the subspace of $l^1(\mathbb{Z}_{\ge 0})$ spanned by 
   \[\left\{\left(\sum_{|u|=n}|\alpha_{uv}|^2\right)_{n=0}^{\infty} \bigg| \ v \in [d]^*\right\}, \] 
   which contains the subspace of $l^1(\mathbb{Z}_{\ge 0})$ spanned by
   \[\left\{\left(\sum_{|w|=n+m}|\alpha_{w}|^2\right)_{n=0}^{\infty} \Bigg| \ m \in \mathbb{Z}_{\ge 0}\right\}. \]
   By applying Lemma \ref{keylemma} (with $a_n=\sum_{|w|=n}|\alpha_w|^2$), we can find $M>0$ and $0<A<1$ such that
   \[ \sum_{|w|=n}|\alpha_w|^2 \le MA^n. \]
   
\end{proof}

\begin{proof}[Proof of Theorem \ref{Main}]
By using the Haagerup inequality for the standard free semicircular system (see \cite[Main theorem]{MR1811255} or \cite[Lemma 3.5]{MR4452070}) and Lemma \ref{lemma3}, there exists $M'>0$ and $0<A'<1$ such that for any $n \in \mathbb{Z}_{\ge 0}$, we have 
\[ \left\|\sum_{|w|=n}\alpha_w U_w\right\|\le M'(n+1){A'}^n\]
where $U_w$ is a non-commutative polynomial in $s$ which is uniquely determined by $U_w\Omega = e_w$. 
 Therefore, we have
 \[\sum_{n=0}^{\infty}\left\|\sum_{|w|=n}\alpha_w U_w\right\|\le M'\sum_{n=0}^{\infty}(n+1){A'}^n <\infty, \]
 which implies $\sum_{n=0}^{\infty}\sum_{|w|=n}\alpha_w U_w$ converges to a bounded operator $f'\in L^{\infty}(s,\tau)$ in operator norm. Since $f'(\Omega) =f(\Omega)$, we conclude $f=f'\in L^{\infty}(s,\tau)$. 

\end{proof}
Theorem \ref{keythm} is also known for free Haar unitaries by \cite{MR1452435} and \cite{MR1742860} where they use an operator $P$ instead of right annihilation operators $\{r^*(e_i)\}_{i=1}^d$. Since the domains of $P$ and any $L^2$-bounded operators $f$ contain the algebra of $\ast$-polynomials, the proof of Lemma \ref{lemma1} also works for free Haar unitaries. Moreover, we can prove other lemmas for free Haar unitaries in a similar way (see \cite{MR1452435}), and therefore our main result also holds for free Haar unitaries. 
\begin{thm}\label{freeHaar}
   Let $u=(u_1,\ldots,u_d)$ be a tuple of the free Haar unitaries. If $f \in D(u)\cap L^2(u,\tau)$, then $f \in L^{\infty}(u,\tau)$. Moreover, for $f \in D(u)$, the usual spectrum and $L^2$-spectrum of $f$ coincide.
\end{thm}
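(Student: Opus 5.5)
We transplant the argument for Theorem \ref{Main} to free Haar unitaries, using the reduced group von Neumann algebra of the free group $\mathbb{F}_d$. Via $L^2(u,\tau)\cong \ell^2(\mathbb{F}_d)$, an element $f\in L^2(u,\tau)$ has an expansion $f\delta_e=\sum_{g\in\mathbb{F}_d}\alpha_g\delta_g$. In place of Theorem \ref{keythm} we use the Linnell/Duchamp--Reutenauer type characterization from \cite{MR1452435} and \cite{MR1742860}. There, $P$ is a Fredholm-module type operator on $\ell^2(\mathbb{F}_d)$, namely the projection onto reduced words ending in a fixed letter (or the associated right-shift structure). An element $f\in L^\infty(u,\tau)$ lies in $C_{\mathrm{div}}(u)$ if and only if $[P,f]$ has finite rank, and moreover $D(u)\cap L^\infty(u,\tau)=C_{\mathrm{div}}(u)$.

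\textbf{Step 1 (analogue of Lemma \ref{lemma1}).} For $f\in D(u)\cap L^2(u,\tau)$, the element $(1+ff^*)^{-1}f$ lies in $D(u)\cap L^\infty(u,\tau)$, since $D(u)$ is a skew field. Hence $[P,(1+ff^*)^{-1}f]$ has finite rank. Both $P$ and $f$ are defined on the group algebra $\mathbb{C}[\mathbb{F}_d]\delta_e$, and $P$ preserves this subspace. The Leibniz identity from the proof of Lemma \ref{lemma1}, together with injectivity of $(1+ff^*)^{-1}$, then shows that $[P,f]\,\mathbb{C}[\mathbb{F}_d]\delta_e$ is finite dimensional.

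\textbf{Step 2 (analogue of Lemma \ref{lemma2}).} Computing $[P,f]\delta_v$ yields tails of the coefficient function, of the form $\sum_u\alpha_{uv'}\delta_u$ restricted to reduced concatenations. So the span $K'$ of the tails $(\alpha_{uv})_u$ is finite dimensional. The Hadamard-square trick of Lemma \ref{lemma2} then makes the span of $(|\alpha_{uv}|^2)_u$ finite dimensional.

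\textbf{Step 3 (analogue of Lemma \ref{lemma3}).} Summing over reduced words of fixed length $n$, Lemma \ref{keylemma} gives $\sum_{|g|=n}|\alpha_g|^2\le MA^n$ with $A<1$.

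\textbf{Step 4 (boundedness).} By Haagerup's inequality for $\mathbb{F}_d$,
\[
\Bigl\|\sum_{|g|=n}\alpha_g\lambda(g)\Bigr\|\le (n+1)\Bigl(\sum_{|g|=n}|\alpha_g|^2\Bigr)^{1/2}.
\]
The series $\sum_n \sum_{|g|=n}\alpha_g\lambda(g)$ therefore converges in norm to some $f'$. Since $f'\delta_e=f\delta_e$, we get $f=f'\in L^\infty(u,\tau)$. The spectral statement then follows word for word as in Corollary \ref{spectrum}.

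\textbf{Main obstacle.} The delicate point is the combinatorics in Steps 2--3. Reduced words in $\mathbb{F}_d$ use $2d$ letters with cancellation, so concatenation $uv$ is not always reduced. We must check that the commutator formula still expresses tails $\alpha_{uv}$ over all $u$ with $uv$ reduced. We must also check that, after grouping by length, the subspace spanned by these tails still contains the shifted sequences $(\sum_{|w|=n+m}|\alpha_w|^2)_n$. Our first attempt would be to restrict to $u$ whose last letter is compatible with the first letter of $v$, and sum over the finitely many possible first letters. This gives a finite sum of elements of $K$, which is enough for Lemma \ref{keylemma}.
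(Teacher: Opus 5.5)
Your proposal is correct and follows the paper's route, which it only sketches for this case. The paper likewise replaces Theorem~\ref{keythm} with the Duchamp--Reutenauer/Linnell criterion for their operator $P$ and runs Lemma~\ref{lemma1} verbatim on $\mathbb{C}[\mathbb{F}_d]\delta_e$; it then transfers the remaining lemmas as in \cite{MR1452435} and finishes with Haagerup's inequality for $\mathbb{F}_d$ exactly as in Theorem~\ref{Main}.

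One correction: $P$ must be the operator of the cited criterion, not the projection onto words ending in a single fixed generator $a$. That projection commutes with $\lambda(b)$ for every other generator $b$, hence with all of $L(\langle b\rangle)$, which is not contained in $C_{\mathrm{div}}(u)$. With the correct $P$, the Step~2 check you flag (tails at every cut point of a reduced word) is precisely what the paper delegates to \cite{MR1452435}.
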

\begin{rem}
    Since Lemma \ref{keylemma} only uses $c_0$-condition of the coefficients,
    one may wonder if we can replace the $L^2$-condition in Theorem \ref{Main} by a weaker condition. Actually, we can see that all $L^1$-bounded rational functions in a single Haar unitary are bounded since the $L^1$-condition tells us that the Fourier coefficients $\{a_n\}_{n\in \mathbb{Z}}$ converge to $0$ as $n \to \pm \infty$.
    However, we have a counterexample for a single semicircle operator $s$. By elementary computation, we can see $f(s)=(2-s)^{-1}$ is integrable (not square-integrable) with respect to the standard semicircle distribution, but $f(s)$ is not bounded since the support of $s$ is $[-2,2]$. 

    Interestingly, we also have a counterexample for $d$ free Haar unitaries $\{u_i\}_{i=1}^d$ with $d\ge 2$. Actually, $f=(2\sqrt{2d-1}-\sum_{i=1}^du_i+u_i^*)^{-1}$ is integrable for $d\ge 2$ (the spectral measure of $\sum_{i=1}^du_i+u_i^*$ is the Kesten-McKay distribution), but not bounded since the support of $\sum_{i=1}^du_i+u_i^*$ is $[-2\sqrt{2d-1},2\sqrt{2d-1}]$.
\end{rem}
In the proof of Theorem \ref{Main}, we use the Haagerup inequality. We have another application of this inequality to the spectrum, which has already been discussed in \cite[Remark 3.5]{MR4685342}.
\begin{prop}\label{l2fourmula}
    For a (non-commutative) polynomial $p \in \mathbb{C}\langle s \rangle$, we have
    \[ r(p) = \lim_{n \to \infty}\|p^n\|_2^{\frac{1}{n}}\]
    where $r(p)$ denotes the spectral radius of $p$.
\end{prop}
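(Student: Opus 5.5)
The plan is to sandwich $\|p^n\|_2^{1/n}$ between two quantities that both tend to $r(p)$. Throughout, $p\in\mathbb{C}\langle s\rangle$ has degree $m$.

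The upper bound is immediate. Since $\tau$ is a state, $\|p^n\|_2\le \|p^n\|$. By Gelfand's spectral radius formula in the C$^*$-algebra $L^\infty(s,\tau)$, this gives
\[\limsup_{n\to\infty}\|p^n\|_2^{1/n}\le \lim_{n\to\infty}\|p^n\|^{1/n}=r(p).\]

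For the lower bound I will use the Haagerup inequality, as in the proof of Theorem \ref{Main}. Write $p^n(\Omega)=\sum_{|w|\le nm}\alpha^{(n)}_w e_w$, so that $p^n=\sum_{k=0}^{nm}\sum_{|w|=k}\alpha^{(n)}_wU_w$. The Haagerup inequality says $\|\sum_{|w|=k}\beta_wU_w\|\le (k+1)\bigl(\sum_{|w|=k}|\beta_w|^2\bigr)^{1/2}$. Summing over the homogeneous components and applying Cauchy--Schwarz in $k$, I get
\[\|p^n\|\le \sum_{k=0}^{nm}(k+1)\Bigl(\sum_{|w|=k}|\alpha^{(n)}_w|^2\Bigr)^{1/2}\le (nm+1)^{3/2}\,\|p^n\|_2 .\]
The essential point is that the polynomial prefactor $(nm+1)^{3/2}$ depends only on the degree $nm$, so its $n$-th root tends to $1$. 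Taking $n$-th roots and letting $n\to\infty$ gives
\[r(p)=\lim_{n\to\infty}\|p^n\|^{1/n}\le \liminf_{n\to\infty}\|p^n\|_2^{1/n}.\]
Combined with the upper bound, the limit exists and equals $r(p)$.

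The only step needing care is the precise form of the Haagerup inequality for $s$. If the constant available in \cite{MR1811255} or \cite[Lemma 3.5]{MR4685342} is $C(k+1)$ rather than $k+1$, or carries some other polynomial growth in $k$, the same argument still works: after taking $n$-th roots, a factor polynomial in $nm$ still tends to $1$. I do not see any genuine obstacle, since this is a standard consequence of the rapid decay property.
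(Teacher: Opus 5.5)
Your proposal is correct and follows the paper's proof: the upper bound comes from $\|X\|_2\le\|X\|$ together with Gelfand's formula, and the lower bound comes from the Haagerup inequality, which gives $\|p^n\|\le (nm+1)^{3/2}\|p^n\|_2$. The paper writes this last bound with an equality sign, but it should read $\le$ as you have it, and your Cauchy--Schwarz step is valid because $\sum_{k=0}^{N}(k+1)^2\le (N+1)^3$.
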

\begin{proof}
    Recall the formula of the spectral radius:
\[ r(p)=\lim_{n \to \infty}\|p^n\|^{\frac{1}{n}}.\]
Since we have $\|X\|_2 \le \|X\|$ for any $X \in L^{\infty}(s,\tau)$, we have
\[ \limsup_{n\to \infty }\|p^n\|_2^{\frac{1}{n}} \le  r(p). \]
On the other hand, by using the Haagerup inequality for general polynomials in the standard free semicircular system, we have
\[ \|p^n\|=(nk+1)^{\frac{3}{2}}\|p^n\|_2\]
where $k$ is the degree of $p$.
Therefore, we obtain
\[ r(p)\le \liminf_{n \to \infty} \|p^n\|_2^{\frac{1}{n}}. \]

\end{proof}
\begin{rem}
The proof of Proposition \ref{l2fourmula} works for polynomials in the algebra with the rapid decay property. In particular, any element in the free group algebra $\mathbb{C}[F_d]$ satisfies the formula in this proposition. 
    The formula also holds for matrix algebras of any size since the $L^2$-norm (with respect to a normalized matrix trace) is equivalent to the $L^{\infty}$-norm. However, this formula is not always true for any operator in a $\mathrm{II}_1$-factor. As a counterexample, we can take an operator $pu$ in the cross product von Neumann algebra $L^{\infty}(\{0,1\}^{\mathbb{Z}},\prod_{\mathbb{Z}}\frac{1}{2}(\delta_0+\delta_1))\rtimes_{\sigma} \mathbb{Z}$ with respect to the Bernoulli shift $\sigma$ where  $p$ is an indicator function (projection) on a cylinder set of $\{\omega_0=0\}$ and $u$ is the unitary operator corresponding to $1 \in \mathbb{Z}$. Indeed, we have $(pu)^n=\left(\prod_{i=1}^n \sigma^{i-1}(p)\right) u^n$ whose operator norm is $1$ (since $pu$ is a product of an orthogonal projection and unitary), but $L^2$norm is $2^{-\frac{n}{2}}$.
    The author would like to thank Adrian Ioana for pointing out this counterexample.
\end{rem}
To conclude this section, we explain a method to compute the resolvent sets of (not necessarily self-adjoint) polynomials in free semicircular operators. Let $f$ be a polynomial in free semicircular operators. By Corollary \ref{spectrum}, it is enough to find the $L^2$-resolvent set, so we assume $(\lambda-f)^{-1}\in L^2(s,\tau)$ for $\lambda \in \mathbb{C}$. Then, we can determine the condition for $\lambda$ by the following three steps:
\begin{enumerate}
    \item Set $(\lambda-f)^{-1}\Omega=\sum_{w \in [d]^*}\alpha_w e_w$, and compute $(\lambda-f)\sum_{w \in [d]^*}\alpha_w e_w$.
    \item Deduce the recursion for $\{\alpha_w\}_{w \in [d]^*}$ from $(\lambda-f)\sum_{w \in [d]^*}\alpha_w e_w=\Omega.$
    \item Deduce the recursion for $a_n=\sum_{|w|=n}|\alpha_w|^2$. Then, $\lim_{n \to \infty}a_n=0$ is an equivalent condition to $\lambda \in \mathrm{spec}(f)^c$ by Lemma \ref{keylemma} and Corollary \ref{spectrum}.
\end{enumerate}
For the last step, the condition $\lim_{n \to \infty}a_n=0$ is often equivalent to the condition that the spectral radius (i.e., the maximum of the absolute values of the eigenvalues) of the representation matrix of the recursion for $a_n$ is less than $1$. This happens when the solution of the recursion involves the eigenvalue of the representation matrix that has the largest absolute value. This depends on the initial vector of the recursion, but this method works in many cases. In the next section, we compute the spectra of holomorphic polynomials in free circular operators by using this method.

  \section{Computation of the spectrum for free circular elements}
  In this section, we compute the spectrum of polynomials in a free circular system $\{c_i\}_{i=1}^d$. A circular random variable is a free analog of a complex Gaussian random variable, and a free circular system $\{c_i\}_{i=1}^d$ can be written by using a free semicircular system  $\{s_i\}_{i=1}^{2d}$ with the same tracial state $\tau$:  
  \[ c_i= \frac{s_i+\sqrt{-1}s_{d+i}}{\sqrt{2}}.\]
  Note that $s_i=\frac{c_i+c_i^*}{\sqrt{2}}$ and $s_{d+i}=\frac{c_i-c_i^*}{\sqrt{-2}}$. This implies that the set $\mathbb{C}\langle c, c^*\rangle$ of non-commutative polynomials in $\{c_i\}_{i=1}^d\cup \{c_i^*\}_{i=1}^d $ is equal to the set  $\mathbb{C}\langle s\rangle$ of non-commutative polynomials in $\{s_i\}_{i=1}^{2d}$, and therefore their division closures $D(c,c^*)$, $D(s)$ in $L(s,\tau)=L(c,c^*,\tau)$ also coincide. In particular, for any $f \in D(c,c^*)$, the $L^2$-spectral set of $f$ is equal to the usual spectral set of $f$.
  
  To describe GNS-Hilbert space $L^2(c,c^*,\tau)$  we set for each $i=1,\ldots,d$,
  \begin{align*}
u_i&=\frac{e_i+\sqrt{-1}e_{d+i}}{\sqrt{2}}\\
u_{\overline{i}}&=\frac{e_i-\sqrt{-1}e_{d+i}}{\sqrt{2}},
  \end{align*}
  so that we have
  \[c_i = l(u_i)+l^*(u_{\overline{i}}), \quad  c_i^* = l(u_{\overline{i}})+l^*(u_i). \]
  Note that $\{u_i\}_{i=1}^d\cup \{u_{\overline{i}}\}_{i=1}^d$ is a orthonormal basis of $H=\mathbb{C}^{2d}$ and
  \[ c_i\Omega =u_i, \quad c_i^*\Omega = u_{\overline{i}}.\]
 Let $[d,\overline{d}]^*$ be the free monoid generated by $[d,\overline{d}]=\{1,\ldots,d\} \cup \{\overline{1},\ldots,\overline{d}\}$ and the identity (empty word) $\Omega$. Similar to the semicircle case, for $w=w_1\cdots w_n \in [d,\overline{d}]$, we set 
 \[ u_w = u_{w_1}\otimes \cdots \otimes u_{w_n}\]
 and $u_{\Omega}=\Omega$. Then $\{u_w| \ w \in [d,\overline{d}]\}$ forms an orthonormal basis of $\mathcal{F}(\mathbb{C}^{2d})$.  Since we have the isomorphism between $L^2(c,c^*,\tau)$ and $\mathcal{F}(\mathbb{C}^{2d})$ given by $X \mapsto X\Omega$, for each $w \in [d,\overline{d}]^*$, there is a unique polynomial $U_w\in \mathbb{C}\langle c,c^*\rangle$ such that $U_w(\Omega) =u_w$. By the definition of $\{c_i\}_{i=1}^d$, we have for each $i\in [d]$ and $w=w_1w_2 \cdots w_n\in [d,\overline{d}]^*$, 
 \begin{align*}
     u_{iw}&=c_iu_w-\delta_{\overline{i},w_1}u_{w_2\cdots w_n}\\
     u_{\overline{i}w}&=c_i^*u_w-\delta_{i,w_1}u_{w_2\cdots w_n},
 \end{align*}
 which gives the following recursion for $U_w$:
 \begin{align*}
     U_{iw}&=c_iU_w-\delta_{\overline{i},w_1}U_{w_2\cdots w_n}\\
     U_{\overline{i}w}&=c_i^*U_w-\delta_{i,w_1}U_{w_2\cdots w_n}.
 \end{align*}
 By using this recursion, we have the following formula 
 for $U_w$ in terms of $l,l^*$ (cf. \cite[Proposition 2.7]{MR1463036}):
 \[U_w=\sum_{k=0}^nl(u_{w_1})\cdots l(u_{w_k})l^*(u_{\overline{w_{k+1}}})\cdots l^*(u_{\overline{w_{n}}}) \]
 where $w=w_1\cdots w_n$ ($w_i \in [d,\overline{d}]$) and we set $\overline{(\overline{i})}=i$. 

We focus on polynomials in a free circular system $c=\{c_i\}_{i=1}^d$ without adjoints $c^*=\{c_i^*\}_{i=1}^d$ and $\mathbb{C}\langle c\rangle$ denotes the set of such polynomials (say holomorphic polynomials). Let $L^2(c,\tau)$ be the $L^2$-closure of $\mathbb{C}\langle c\rangle$. Note that $X \mapsto X\Omega$ gives the Hilbert space isomorphism between $L^2(c,\tau)$ and $\mathcal{F}(\mathbb{C}^d)$ which is spanned by $\{u_w\}_{w \in [d]^*}$, and we have $u_w=c_w\Omega$ where $c_w=c_{w_1}\cdots c_{w_n}$ for $w=w_1\cdots w_n$.

The following lemma is fundamental, but it is useful to compute the spectrum. 
\begin{lm}\label{holmorphic}
    Let $f \in \mathbb{C}\langle c\rangle$. For $\lambda \in \mathbb{C}$, we have \[ (\lambda - f)^{-1} \in L^2(c,c^*,\tau) \Rightarrow (\lambda - f)^{-1} \in L^2(c,\tau).\]
\end{lm}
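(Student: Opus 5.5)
Write $g=(\lambda-f)^{-1}\in L^2(c,c^*,\tau)$ and expand $g\Omega=\sum_{w\in[d,\overline{d}]^*}\alpha_w u_w$. The plan is to show $\alpha_w=0$ whenever $w$ contains a letter from $\overline{d}$. Then $g\Omega$ lies in $\mathcal{F}(\mathbb{C}^d)=\overline{\mathrm{span}}\{u_w\}_{w\in[d]^*}$, which is the image of $L^2(c,\tau)$.

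\emph{Key identity.} Since $f$ is a holomorphic polynomial, write $f=\sum_{w\in[d]^*}\beta_w c_w$, a finite sum. Each $c_i$ acts on the Fock space as $l(u_i)+l^*(u_{\overline{i}})$. The affiliated operator $L_g$ satisfies $(\lambda-f)L_g\xi=\xi$ on its domain. Take $\xi=\Omega$; this is legitimate because $\Omega\in\mathbb{C}\langle s\rangle\Omega$ lies in the domain and $\lambda-f$ is bounded. This gives
\[(\lambda-f)\,g\Omega=\Omega .\]

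\emph{Decomposition.} Let $\mathcal{H}_0=\mathcal{F}(\mathbb{C}^d)$, the closed span of $u_w$ with $w\in[d]^*$. Let $\mathcal{H}_1=\mathcal{H}_0^\perp$, the span of words containing at least one barred letter. First, $\mathcal{H}_0$ is invariant under every $c_i$: the creation $l(u_i)$ preserves unbarred words, and $l^*(u_{\overline{i}})$ kills unbarred words. Second, $\mathcal{H}_1$ is invariant under every $c_i$. Indeed, $l(u_i)$ prepends an unbarred letter, so the barred letter survives. The operator $l^*(u_{\overline{i}})$ is nonzero only when the first letter is $\overline{i}$, and it deletes that letter. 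This can produce a purely unbarred word, for example $\overline{i}\,j$ goes to $j$, so invariance fails there.

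\emph{Obstacle and fix.} Since $\mathcal{H}_1$ is not invariant under the $c_i$, I work with the adjoint instead. By the first observation, $\mathcal{H}_0$ is invariant under $\lambda-f$, so $\mathcal{H}_1$ is invariant under $(\lambda-f)^*$. Write $g\Omega=\eta_0+\eta_1$ with $\eta_j\in\mathcal{H}_j$. Applying $\lambda-f$ gives
\[(\lambda-f)\eta_1=\Omega-(\lambda-f)\eta_0\in\mathcal{H}_0 .\]
I then want to conclude $\eta_1=0$ from injectivity of $\lambda-f$ on all of $L^2$, and this is where the argument needs care.

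\emph{Using injectivity.} Because $g=(\lambda-f)^{-1}$ exists in $L^0$, $\lambda-f$ is injective as an operator on $\mathcal{F}(H)$. By finiteness of the trace, $\lambda-f$ then has dense range. By invariance of $\mathcal{H}_0$, the operator $(\lambda-f)|_{\mathcal{H}_0}$ is injective. The planned route is to show it also has dense range in $\mathcal{H}_0$, and then solve $(\lambda-f)\eta_0'=\Omega$ inside $\mathcal{H}_0$, so that uniqueness forces $\eta_1=0$.

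\emph{Cleaner route via right multiplication.} The step of constructing a solution inside $\mathcal{H}_0$ is the main difficulty. To avoid it, I would use the commuting right action instead of the decomposition argument above. Consider the right-handed versions $r(u_i)+r^*(u_{\overline{i}})$, and let $\Psi$ be a unitary fixing $\mathcal{H}_0$ pointwise, implemented by the Fock space symmetry $c_i\mapsto e^{\sqrt{-1}\theta}c_i$. This gauge action $\gamma_\theta$ is a trace-preserving automorphism. It acts on $u_w$ by $e^{\sqrt{-1}\theta(\#\text{unbarred}-\#\text{barred})}$.

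For $\theta$ fixed, uniqueness of inverses gives $\gamma_\theta(g)=(\lambda-\gamma_\theta(f))^{-1}$. This handles the degree structure but does not by itself kill the barred components, so it is not enough.

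The decisive step uses the triangular structure instead. The map $l^*(u_{\overline{i}})$ lowers the barred count only by removing a leading barred letter. Consider coefficients $\alpha_w$ for words $w$ with $k\ge1$ barred letters, and induct downward on the position of the last barred letter. Looking at the component of $(\lambda-f)g\Omega=\Omega$ along a word $v$, the coefficients involving $v$ relate $\alpha_v$ to $\alpha$'s of words obtained by prefixing letters. Reading this word by word, for each fixed tail containing a barred letter, the map $(\alpha_{uv})_u\mapsto$ coefficients is a copy of the action of $\lambda-f$ on $L^2$ applied to $\sum_u\alpha_{uv}u_u$. That vector therefore lies in the kernel of $\lambda-f$, which is zero. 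I expect verifying this fibration of words by barred tails to be the real work.
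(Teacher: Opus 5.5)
Your final paragraph has the right mechanism, but as stated it is false, and its verification is exactly what you defer. So there is a genuine gap. The earlier routes do not fill it: you retract the invariance of $\mathcal{H}_1$ yourself, and dense range of $(\lambda-f)|_{\mathcal{H}_0}$ cannot place $\Omega$ in $(\lambda-f)\mathcal{H}_0$, a solvability statement that is equivalent to the lemma itself. The fibration fails for tails that merely \emph{contain} a barred letter, because the creation part of $f$ writes unbarred letters across the boundary between $u$ and $v$. For example, take $f=c_j$, $v=j\overline{i}$ and $\xi_v=\sum_u\alpha_{uv}u_u$. The $u_{j\overline{i}}$-coefficient of $(\lambda-c_j)g\Omega=\Omega$ reads $\lambda\alpha_{j\overline{i}}-\alpha_{\overline{j}j\overline{i}}-\alpha_{\overline{i}}=0$. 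The last term comes from $l(u_j)u_{\overline{i}}=u_{j\overline{i}}$, although the word $\overline{i}$ does not end in $v$. Consequently you only get $(\lambda-c_j)\xi_v=\alpha_{\overline{i}}\Omega$, which you cannot yet assert to be zero.

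The missing ingredient is the commutation relation on which the paper's proof rests: $[r^*(u_a),l(u_j)]=\langle u_j,u_a\rangle P_\Omega$ and $[r^*(u_a),l^*(u_{\overline{j}})]=0$, hence $[r^*(u_{\overline{i}}),c_j]=0$. Set $R_v^*=r^*(u_{v_1})\cdots r^*(u_{v_m})$, so that $R_v^*g\Omega=\xi_v$. The Leibniz rule and $r^*(\cdot)P_\Omega=0$ give $[R_v^*,c_j]=\delta_{v_1,j}P_\Omega\,r^*(u_{v_2})\cdots r^*(u_{v_m})$, which vanishes for every $j$ when $v$ \emph{begins} with a barred letter $\overline{i}$. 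For such $v$ you get $(\lambda-f)\xi_v=R_v^*(\lambda-f)g\Omega=R_v^*\Omega=0$. The injectivity of $\lambda-f$, which you correctly deduced from invertibility in $L^0$, then gives $\alpha_{u\overline{i}v'}=0$ for all words $u,v'$. Every word containing a barred letter has this form, so no induction is needed.

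Once repaired, your argument is the paper's in a slightly different form. The paper proves $[r^*(u_{\overline{i}}),(\lambda-f)^{-1}]=0$ and evaluates it on the vectors $u_{v^*}$. You instead apply the annihilators only to $g\Omega$ and use injectivity of the bounded operator $\lambda-f$. This has the small advantage of never commuting $r^*(u_{\overline{i}})$ past the unbounded operator $L_g$.
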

\begin{proof}
Note that the right annihilation operators $\{r^*(u_{\overline{i}})\}_{i=1}^d$
satisfy for any $i,j \in [d]$
    \begin{align*}
        [r^*(u_{\overline{i}}), l(u_j)]=0, & \quad [r^*(u_{\overline{i}}),l(u_{\overline{j}})]=\delta_{ij}P_{\Omega}\\
         [r^*(u_{\overline{i}}), l^*(u_j)]=0, & \quad [r^*(u_{\overline{i}}),l^*(u_{\overline{j}})]=0,\
    \end{align*}
    where $P_{\Omega}$ is the orthogonal projection onto $\mathbb{C}\Omega$. In particular, we have $[r^*(u_{\overline{i}}), c_j]=0$.
     By the Leibniz rule of the commutator, we have 
     \[ [r^*(u_{\overline{i}}), f]=0\]
     for any $f \in \mathbb{C}\langle c\rangle$ and $i\in [d]$. Therefore, we have for any $i \in [d]^*$,
          \[ [r^*(u_{\overline{i}}), (\lambda-f)^{-1}]=-(\lambda-f)^{-1}[r^*(u_{\overline{i}}), \lambda-f](\lambda-f)^{-1}=0.\]
    Let $(\lambda-f)^{-1}\Omega=\sum_{w \in [d,\overline{d}]^*}\alpha_w u_{w}$ be the expansion of $(\lambda-f)^{-1}\in L^2(c,c^*,\tau)\cong \mathcal{F}(\mathbb{C}^{2d})$ with respect to $\{u_w| \ w \in [d,\overline{d}]\}$. 

    By applying Leibniz's rule of the commutator $[r^*(u_{\overline{i}}), \ \cdot \ ]$ to the expansion of $U_w$ in terms of $l,l^*$, we can see
    \[ [r^*(u_{\overline{i}}), U_w]u_v=\begin{cases}
        u_{w'}, \ w=w'\overline{i}v^*, {}^\exists w' \in [d,\overline{d}]\\
        0, \ \mathrm{otherwise},
    \end{cases}\]
    where we define $v^*=\overline{v_n}\cdots \overline{v_1}$ for $v=v_1 \cdots v_n$.
    By applying this formula to the $L^2$-expansion of $(\lambda-f)^{-1}$, we have for $v \in [d,\overline{d}]^*$  
    \[ [r^*(u_{\overline{i}}),(\lambda-f)^{-1}]u_{v^*}=\sum_{w \in [d]}\alpha_{w\overline{i}v} u_w.\]
    If a word $w \in [d,\overline{d}]^*$ contains a letter $\overline{i}\in \{\overline{1},\ldots,\overline{d}\}$, then we can write $w = w_1\overline i w_2$ for some words $w_1,w_2 \in [d,\overline{d}]^*$. Then we have
    \[ \langle[r^*(u_{\overline{i}}),(\lambda-f)^{-1}]u_{w_2^*}, u_{w_1}\rangle=\alpha_{w}=0,\]
    which implies $(\lambda-f)^{-1}\in L^2(c,\tau)$.
\end{proof}
Thanks to this lemma, we can compute the spectra of polynomials in $\mathbb{C}\langle c\rangle$ as if each $c_i$ is a left creation $l(u_i)$ on the full Fock space. To clarify, we have the following corollary. 
\begin{cor}
    Let $c=(c_1,\ldots,c_d)$ be a free circular system. Then, for any non-commutative polynomial $P\in \mathbb{C}\langle t\rangle$ with indeterminates $t=(t_1,\ldots,t_d)$, we have
    \[ \mathrm{spec}(P(c))=\mathrm{spec}(P(l))\]
    where $l=(l(e_1),\ldots,l(e_d))$ is a tuple of the left creation operators on $\mathcal{F}(\mathbb{C}^d)$.
\end{cor}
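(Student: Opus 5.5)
To prove $\mathrm{spec}(P(c))=\mathrm{spec}(P(l))$, I would compare the two resolvent sets through their vectors at $\Omega$. The intermediate claim is that for every $\lambda\in\mathbb{C}$, $(\lambda-P(c))^{-1}$ is bounded if and only if there is $\xi\in\mathcal{F}(\mathbb{C}^d)$ with $(\lambda-P(l))\xi=\Omega$ satisfying a suitable uniqueness condition, and that this same condition characterizes invertibility of $\lambda-P(l)$.

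Identify $\mathcal{F}(\mathbb{C}^d)$ with the closed span of $\{u_w\}_{w\in[d]^*}$ inside $\mathcal{F}(\mathbb{C}^{2d})$. This subspace is invariant under every $c_i$, since $l^*(u_{\overline{i}})$ kills it: no letter $\overline{i}$ occurs. On it, $c_i$ acts as $l(u_i)$, so $P(c)$ restricted to $\mathcal{F}(\mathbb{C}^d)$ equals $P(l)$, with $l(e_i)\leftrightarrow l(u_i)$. In particular $P(c)X\Omega=P(l)X\Omega$ for $X\in L^2(c,\tau)$.

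\emph{From $P(c)$ to $P(l)$.} Let $\lambda\notin\mathrm{spec}(P(c))$ and set $g=(\lambda-P(c))^{-1}\in L^\infty(c,c^*,\tau)\subset L^2(c,c^*,\tau)$. By Lemma \ref{holmorphic}, $g\in L^2(c,\tau)$. By its proof, $[r^*(u_{\overline{i}}),g]=0$, and $g$ commutes with the right creations $r(u_j)$, since $g$ lies in the left von Neumann algebra and these lie in its commutant, the right algebra. I would use this to show that $g$ leaves $\mathcal{F}(\mathbb{C}^d)$ invariant: $g u_w=g\,r(u_{w_n})\cdots r(u_{w_1})\Omega$ with appropriate ordering, which equals $r(\cdots)g\Omega\in\mathcal{F}(\mathbb{C}^d)$ because $g\Omega\in\mathcal{F}(\mathbb{C}^d)$ and right creations by holomorphic letters preserve that subspace. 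Hence $g|_{\mathcal{F}(\mathbb{C}^d)}$ is a bounded two-sided inverse of $\lambda-P(l)$, and $\lambda\notin\mathrm{spec}(P(l))$.

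\emph{From $P(l)$ to $P(c)$.} This is the step I expect to be the main obstacle. Let $T=(\lambda-P(l))^{-1}$ be bounded on $\mathcal{F}(\mathbb{C}^d)$ and put $\xi=T\Omega$. Then $P(l)$ commutes with the right creations $r(e_j)$ on $\mathcal{F}(\mathbb{C}^d)$, hence so does $T$. Choose $X_n\in\mathbb{C}\langle c\rangle$ with $X_n\Omega\to\xi$. I would argue that $\xi$ defines an element $g\in L^2(c,\tau)\subset L^2(c,c^*,\tau)$ whose associated closed operator $L_g$ satisfies $(\lambda-P(c))L_g=1$ in $L^0$. Checking this on $\Omega$ gives $(\lambda-P(c))\xi=(\lambda-P(l))\xi=\Omega$, and in a $\mathrm{II}_1$ factor a one-sided inverse in $L^0$ is two-sided. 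Then $(\lambda-P(c))^{-1}\in L^2$, and since $(\lambda-P(c))^{-1}\in D(s)$, Corollary \ref{spectrum} (equivalently Theorem \ref{Main}) gives boundedness. The delicate point is justifying that $L_g(\lambda-P(c))\eta=\eta$ on a dense domain: because the $L^2$-to-$L^0$ identification respects products with polynomials, I would verify it on $\eta=\Omega$ and extend using right multiplication by polynomials in $c,c^*$.

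If instead $\lambda-P(c)$ has no inverse in $L^0$, its kernel is nonzero. A left injective, non-surjective $\lambda-P(l)$ would contradict the bounded inverse; I expect the trace argument to handle this, since an affiliated operator with dense range and trivial kernel is invertible in $L^0$.
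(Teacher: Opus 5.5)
Your proof is correct in substance. For the inclusion $\mathrm{spec}(P(l))\subseteq\mathrm{spec}(P(c))$ it takes a genuinely different route from the paper. The other inclusion is the paper's argument: build $g\in L^2(c,\tau)$ from $\xi=(\lambda-P(l))^{-1}\Omega$, check $(\lambda-P(c))g=1$, invert in $L^0$, and apply the skew field $D(s)$ and Corollary \ref{spectrum}.

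\textbf{A justification that must be repaired.} The right creations $r(u_j)$ are \emph{not} in the commutant of $L^\infty(c,c^*,\tau)$, because $[c_j^*,r(u_j)]=[l^*(u_j),r(u_j)]=P_\Omega\neq 0$. The commutant is generated by the right circulars $r(u_j)+r^*(u_{\overline{j}})$, not by the creations alone. The conclusion $[g,r(u_j)]=0$ is nevertheless true:
\begin{itemize}
\item $[l(u_i),r(u_j)]=0$ and $[l^*(u_{\overline{i}}),r(u_j)]=\langle u_j,u_{\overline{i}}\rangle P_\Omega=0$, so $r(u_j)$ commutes with every $c_i$.
\item Hence $r(u_j)$ commutes with $\lambda-P(c)$, and therefore with its bounded inverse $g$.
\item Alternatively, combine $[r(u_j)+r^*(u_{\overline{j}}),g]=0$ with $[r^*(u_{\overline{j}}),g]=0$ from the proof of Lemma \ref{holmorphic}.
\end{itemize}
With this fix your invariance argument goes through, and $g|_{\mathcal{F}(\mathbb{C}^d)}$ is a bounded two-sided inverse of $\lambda-P(l)$.

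\textbf{Two smaller points on the other direction.} The ``delicate point'' you flag is immediate in the order you actually need. For $\eta\in L^\infty$, $(\lambda-P(c))L_g\eta=((\lambda-P(c))g)\eta=\eta$ by the bimodule structure. Your last paragraph is superfluous. A one-sided inverse in $L^0$ of a finite von Neumann algebra is automatically two-sided. In any case, for nonconstant $P$, $\lambda-P(c)$ is a nonzero element of the skew field $D(s)$.

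\textbf{Comparison with the paper.} For $\lambda\notin\mathrm{spec}(P(c))$, the paper proceeds in three steps:
\begin{itemize}
\item It expands $(\lambda-P(c))^{-1}=\sum_w\beta_wc_w$.
\item It reruns Lemmas \ref{lemma1}--\ref{lemma3} (finite-rank commutators via Theorem \ref{keythm}, then Lemma \ref{keylemma}) to get $\sum_{|w|=n}|\beta_w|^2\le MA^n$.
\item It uses $\|\sum_{|w|=n}\beta_wl_w\|\le(\sum_{|w|=n}|\beta_w|^2)^{1/2}$ to sum $Y=\sum_w\beta_wl_w$ in operator norm.
\end{itemize}
Your route needs only Lemma \ref{holmorphic} and one commutation relation. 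It avoids the division-closure characterization, the decay estimate, and the Haagerup-type inequality. It also identifies $(\lambda-P(l))^{-1}$ concretely as the restriction of $(\lambda-P(c))^{-1}$ to the holomorphic Fock space. The paper's route gives more quantitative information: the inverse is a norm-convergent series in the creation operators with geometrically decaying homogeneous parts. So it lies in the norm-closed algebra generated by $l(e_1),\ldots,l(e_d)$.
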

\begin{proof}
    If $\lambda$ belongs to the resolvent set $\mathrm{spec}(P(l))^c$ of $P(l)$, then $(\lambda-P(l))^{-1}$ exists as a bounded operator on $\mathcal{F}(\mathbb{C}^d)$. Therefore, we have the following orthonormal decomposition:
    \[(\lambda-P(l))^{-1}\Omega=\sum_{w\in [d]^*}\alpha_w e_w \]
    with $\sum_{w \in [d]^*}|\alpha_w|^2<\infty$. Since $\{c_w\}_{w \in [d]^*}$ forms an orthonormal basis of $L^2(c,\tau)$, the following operator
    \[ X=\sum_{w \in [d]^*}\alpha_w c_w\]
    is well-defined in $L^2(c,\tau)$. 
    Let $\Phi$ denote the isomorphism $\mathcal{F}(\mathbb{C}^d)\ni e_w \mapsto c_w \in L^2(c,\tau)$. Note that $\Phi(l_u e_v)=c_{uv}=c_uc_v$ where $l_u = l(e_{u_1})l(e_{u_2})\cdots l(e_{u_n})$ for $u=u_1u_2\cdots u_n$. This implies the following identity in $L^2(c,\tau)$,
    \[ 1=\Phi[(\lambda-P(l))(\lambda-P(l))^{-1}\Omega]=(\lambda-P(c))\Phi[(\lambda-P(l))^{-1}\Omega]=(\lambda-P(c))X,\]
    which implies $X=(\lambda-P(c))^{-1}$. Then, Corollary \ref{spectrum} tells that $\lambda$ belongs to the resolvent set $\mathrm{spec}(P(c))^c$ of $P(c)$. On the other hand, if $\lambda$ belongs to the resolvent set $\mathrm{spec}(P(c))^c$ of $P(c)$, then by Lemma \ref{holmorphic}, we have $(\lambda-P(c))^{-1}\in L^2(c,\tau)$. Therefore, we have the orthonormal decomposition with respect to $\{c_w\}_{w \in [d]^*}$
    \[(\lambda-P(c))^{-1}=\sum_{w \in [d]^*}\beta_w c_w \]
    with $\sum_{w \in [d]^*}|\beta_w|^2<\infty$. In a similar way to the proof of Theorem \ref{Main}, we can see that there exists $M>0$ and $0< A<1$ such that
    \[ \sum_{|w|=n} |\beta_w|^2\le MA^n. \]
    By Lemma \cite[Theorem 2.2 (b)]{MR1811255} (with $q=0$), we have
    \[ \left\|\sum_{|w|=n }\beta_w l_w\right\|\le \sqrt{\sum_{|w|=n}|\beta_w|^2}\le \sqrt{M}c^{\frac{n}{2}}.\]
    Therefore, the infinite series
    \[Y=\sum_{n=0}^{\infty}\sum_{|w|=n}\beta_w l_w \]
    converges in operator norm, and in particular, it is a bounded operator. Since we have
    \[ \Phi[(\lambda-P(l))Y(e_w)]=(\lambda-P(c))(\lambda-P(c))^{-1}(c_w)=\Phi(e_w),\]
    we get $(\lambda-P(l))Y(e_w)=e_w$ and $(\lambda-P(l))Y$ is the identity operator by continuity. We can also check $Y(\lambda-P(l))$ is the identity operator. Therefore, we have $Y=(\lambda-P(l))^{-1}$ and $\lambda \in \mathrm{spec}(P(l))^c$.
\end{proof}
We remark that this setting can be seen as a non-commutative analog of the Hardy space, and our Theorem \ref{Main} is close to the equivalence of conditions (i) and (vii) in \cite[Theorem A]{MR4302175}. We give several examples of computing the spectra of holomorphic polynomials in free circular systems. 
  \begin{exam}
      Let $f=\sum_{|w|=n} a_w c_w$ with $n \in \mathbb{N}$. Assume $(\lambda-f)^{-1}\in L^2(c,c^*,\tau)$, then by Lemma \ref{holmorphic}, $(\lambda-f)^{-1}\in L^2(c,\tau)$ and we can write $(\lambda-f)^{-1}\Omega = \sum_{w\in [d]^*}\alpha_w u_w$. Then, we have the equation 
      \begin{align*} \lambda \sum_{w\in [d]^*}\alpha_w u_w - f \sum_{w\in [d]^*}\alpha_{w} u_{w} &=\lambda \sum_{w\in [d]^*}\alpha_w u_w - \sum_{|w_1|=n}   \sum_{w_2\in [d]^*}a_{w_1}\alpha_{w_2} u_{w_1w_2}\\
      &=\Omega.
      \end{align*}
      Since $\{u_w\}_{w \in [d]^*}$ is an orthonormal, this equation tells us
      \begin{align*}
          \lambda \alpha_{\Omega}&=1,\\
          \lambda \alpha_w&=0, \quad 1\le |w|\le n-1,\\
          \lambda \alpha_w - a_{w_1}\alpha_{w_2}&=0, \quad w=w_1w_2, \quad |w_1|=n.
      \end{align*}
      From the first equation, we have $\lambda\neq 0$ and $\alpha_{\Omega}=\lambda^{-1}$. By the second equation, we have $\alpha_w=0 $ for any $w$ with $1\le |w|\le n-1$. When $|w|=n$, the third equation tells us \[\alpha_w=\frac{1}{\lambda}a_w \alpha_{\Omega}=\frac{1}{\lambda^2}a_w.\]
      If $n+1\le |w| \le 2n-1$ and $w=w_1w_2$ with $|w_1|=n$, then $1\le |w_2|\le n-1$. In this case, by the second equation, we have $\alpha_{w}=0$. Inductively, we have the following solution of $\{\alpha_w\}_{w \in [d]^*}$:
      \begin{align*}
          \alpha_w&=0, \quad \mathrm{when} \ |w|\neq nk \ (k \in \mathbb{Z}_{\ge 0}),\\
          \alpha_{w_1\cdots w_k}&=\frac{1}{\lambda^{k+1}}a_{w_1}a_{w_2}\cdots a_{w_k}, \quad \mathrm{when} \ |w_1|=|w_2|=\cdots=|w_k|=n.
      \end{align*}
      Since $(\lambda-f)^{-1}\in L^2(c,\tau)$ if and only if  
      \begin{align*}
          \sum_{w \in [d]^*}|\alpha_w|^2&= \sum_{k=0}^{\infty}\frac{1}{|\lambda|^{2k+2}}\sum_{w_1,\ldots,w_k \in [d]^*} \prod_{i=1}^k |a_{w_i}|^2\\
          &=\frac{1}{|\lambda|^2}\sum_{k=0}^{\infty}\left(\frac{\|f\|_2}{|\lambda|}\right)^{2k}
      \end{align*}
      is finite, we have $|\lambda|> \|f\|_2$. Therefore, by Corollary \ref{spectrum}, the spectral set of $f$ is the closed disk with radius $\|f\|_2$. 
      \end{exam}
      In the previous example, we actually see the coincidence of the spectrum and the support of the Brown measure.
      \begin{prop} For $n\in \mathbb{N}$,  $f=\sum_{|w|=n}a_w c_w$ is $\mathscr{R}$-diagonal, and we have
      \[ \mathrm{spec}(f) = \mathrm{supp} \mu_f=\{\lambda \in \mathbb{C}| \  |\lambda| \le \|f\|_2 \}\]
      where $\mu_f $ is the Brown meausure of $f$. 
      \end{prop}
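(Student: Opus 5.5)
The plan has three parts: show that $f$ is $\mathscr{R}$-diagonal, apply Theorem \ref{Haagerup-Larsen}, and compare the resulting annulus with the spectrum computed in the preceding example. That example already shows $\mathrm{spec}(f)=\{|\lambda|\le \|f\|_2\}$. By Theorem \ref{Haagerup-Larsen}, the support of $\mu_f$ is the annulus with outer radius $\|f\|_2$ and inner radius $\|f^{-1}\|_2^{-1}$. The inner radius is read as $0$ when $f$ is not invertible in $L^0$, or when $f^{-1}\notin L^2$. So it suffices to show that $f$ is $\mathscr{R}$-diagonal and that $\|f^{-1}\|_2=\infty$.

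\textbf{Step 1: $\mathscr{R}$-diagonality.} I would use the rotation characterization: an element $a$ is $\mathscr{R}$-diagonal exactly when it has the same $*$-distribution as $ua$ for a Haar unitary $u$ free from $a$. Let $\theta\in\mathbb{R}$. The tuple $(e^{i\theta}c_1,\dots,e^{i\theta}c_d)$ is again a free circular system, and $f$ is homogeneous of degree $n$, so $f$ and $e^{in\theta}f$ have the same $*$-distribution. Rotation invariance alone does not suffice, so I would go further. Write $c_i=u_ih_i$ with $u_1,\dots,u_d$ Haar unitaries, $h_1,\dots,h_d$ quarter-circular, and all of these $*$-free. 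Every monomial $c_w$ of length $n$ starts with some $c_{w_1}=u_{w_1}h_{w_1}$. I would then use the standard fact that the free family $\{c_i\}$ is invariant under a common left multiplication by a free Haar unitary $u$: the family $(uc_1,\dots,uc_d)$ has the same joint $*$-distribution as $(c_1,\dots,c_d)$. One way to see this is to check that the joint free cumulants of $(uc_i)$ vanish except for the alternating $c$, $c^*$ pairs, computing them via cumulants with products as entries. It follows that $uf=\sum a_w (uc_{w_1})c_{w_2}\cdots c_{w_n}$ and $f$ have the same $*$-distribution. An alternative is to invoke the known result (Nica--Speicher) that if $a$ is $\mathscr{R}$-diagonal and free from $b$, then $ab$ is $\mathscr{R}$-diagonal, together with the fact that $\mathscr{R}$-diagonality is preserved under sums of free $\mathscr{R}$-diagonal elements. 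I expect this step to be the main obstacle, because different monomials share variables and so are not free from one another. The cleanest route is probably the joint invariance of the whole circular family under left multiplication by a single free Haar unitary.

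\textbf{Step 2: inner radius zero.} Assume, toward a contradiction, that $f^{-1}\in L^2$. Then $(0-f)^{-1}\in L^2(c,c^*,\tau)$. Lemma \ref{holmorphic}, whose proof does not use $\lambda\neq0$, puts $f^{-1}$ in $L^2(c,\tau)$. Running the recursion of the preceding example with $\lambda=0$ produces the equation $0\cdot\alpha_\Omega=1$, which is impossible. More directly, $f\,\xi\in\overline{\mathrm{span}}\{u_w:|w|\ge n\}$ for every $\xi\in L^2(c,\tau)$, so $f f^{-1}\Omega=\Omega$ cannot hold. Hence $\|f^{-1}\|_2=\infty$ and the inner radius is $0$; if $f$ is not even invertible in $L^0$, the convention gives $0$ directly.

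\textbf{Step 3: conclusion.} Theorem \ref{Haagerup-Larsen} now gives $\mathrm{supp}\,\mu_f=\{|\lambda|\le\|f\|_2\}$, and this equals $\mathrm{spec}(f)$ by the example.
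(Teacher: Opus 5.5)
Your Steps 2 and 3 are fine. They are what the paper means by ``the remaining part is a conclusion from the previous example and Haagerup--Larsen'': via Lemma~\ref{holmorphic}, the example shows that $0$ lies in the $L^2$-spectrum, so the inner radius is $0$.

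The gap is in Step 1. The joint invariance $(uc_1,\dots,uc_d)\sim(c_1,\dots,c_d)$ is true, but it only shows that $f(uc_1,\dots,uc_d)=\sum_w a_w (uc_{w_1})(uc_{w_2})\cdots(uc_{w_n})$ has the $*$-distribution of $f$. The element you actually need is $uf=\sum_w a_w (uc_{w_1})c_{w_2}\cdots c_{w_n}$, which is not of this form when $n\ge 2$. It is a polynomial in the mixed family $\{uc_i\}\cup\{c_j\}$, and that family is not distributed like two copies of $\{c_j\}$. For example, $\tau(c_1^*\,uc_1)=\tau(u)\tau(c_1c_1^*)=0$, whereas $\tau(c_1^*c_1)=1$. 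So ``it follows that $uf$ and $f$ have the same $*$-distribution'' does not follow, already for $f=c_1^2$. As you noted yourself, the product/sum results for free $\mathscr{R}$-diagonal elements do not rescue this, because the factors of a monomial are not free from one another.

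There are two ways to fix Step 1.

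\emph{The paper's route (free cumulants).} Expand $\kappa_m(f^{\epsilon_1},\dots,f^{\epsilon_m})$ multilinearly into cumulants of monomials and apply the formula for cumulants with products as entries. The only nonvanishing cumulants of the free circular family are $\kappa_2(c_i,c_i^*)$ and $\kappa_2(c_i^*,c_i)$. So one must show that, when two cyclically adjacent entries are both $f$ or both $f^*$, no non-crossing pairing $\pi$ matching each $c_i$ with a $c_i^*$ satisfies $\pi\vee\sigma=1$. This is the same argument as for powers of an $\mathscr{R}$-diagonal element. Homogeneity enters here: with blocks of unequal length the combinatorial claim fails, e.g.\ $\kappa_3(c,c,c^*c^*)\neq 0$.

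\emph{Rescuing your invariance fact (telescoping).} Since $\mathscr{R}$-diagonality depends only on the $*$-distribution, your invariance fact lets you replace $(c_i)$ by $(uy_i)$, where $(y_i)$ is free circular and $u$ is a Haar unitary $*$-free from it. Then
\[ \sum_{|w|=n}a_w\,(uy_{w_1})\cdots(uy_{w_n})=\Big(\sum_{|w|=n}a_w\prod_{k=1}^n u^ky_{w_k}u^{-k}\Big)u^n=:x\,u^n . \]
Set $A=W^*(y_1,\dots,y_d)$. A reduced-word check shows that the Haar unitary $u^n$ is $*$-free from $B=W^*\big(\bigcup_{k=1}^n u^kAu^{-k}\big)\ni x$. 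Indeed, every interior exponent is either $k'-k\neq 0$, or $nm+k'-k\neq0$ with $m\neq 0$ and $|k'-k|<n$. Hence $x u^n$ is $\mathscr{R}$-diagonal, and this is again exactly where homogeneity is used.
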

  
\begin{proof}
The proof of $\mathscr{R}$-diagonality is almost the same as the proof of the well-known fact that if $a$ is $\mathscr{R}$-diagonal, then $a^n$ ($n \in \mathbb{N}$) is also $\mathscr{R}$-diagonal (\cite[Proposition 15.22.]{MR2266879}). When we compute the free cumulant $\kappa_m(\cdots,a^n,a^n,\cdots)$ and $\kappa_m(\cdots,a^{*n},a^{*n},\cdots)$ by using the product formula \cite[Theorem 11.12.]{MR2266879}, each term must contain a factor $\kappa_{m_1}(\cdots,a,a,\cdots)$ or $\kappa_{m_2}(\cdots,a^*,a^*,\cdots)$ which is equal to $0$ due to $\mathscr{R}$-diagonality of $a$. Similarly, when we compute the free cumulant $\kappa_m(\cdots,f^{n},f^{n},\cdots)$) and by expanding the sum and using the product formula, each term must contain a factor $\kappa_{m_1}(\cdots,c_i,c_j,\cdots)$ or $\kappa_{m_2}(\cdots,c_i^*,c_j^*,\cdots)$ which is equal to $0$ due to the freeness of $\{c_i\}_{i=1}^d$ and $\mathscr{R}$-diagonality of each $c_i$ (since we only use freeness and $\mathscr{R}$-diagonality, this argument works for any tuple of free $\mathscr{R}$-diagonal elements).  
The remaining part is a conclusion from the previous example and the result by Haagerup and Larsen (Theorem \ref{Haagerup-Larsen}). 
\end{proof}
 
\begin{exam}
    Let $k \in \mathbb{N}$ and $f=(1+tc_1)(1+tc_2)\cdots (1+tc_k)$ with $t\in \mathbb{R}$.
    We write $ (\lambda-f)^{-1}\Omega=\sum_{w \in [d]^*}\alpha_w u_w$. Note that we have 
    \[ f=\sum_{m=0}^k \sum_{i_1<i_2\cdots <i_m} t^m c_{i_1i_2\cdots i_m}, \]
    and we obtain the equation
    \[ (\lambda-1)\sum_{w \in [d]^*}\alpha_w u_w-\sum_{w \in [d]^*}\sum_{m=1}^k \sum_{i_1<i_2\cdots <i_m} t^m \alpha_w u_{i_1i_2\cdots i_mw}  = \Omega.\]
    This equation implies
    \begin{align*}
        (\lambda-1)\alpha_{\Omega}&=1, \\
        (\lambda-1)\alpha_{i_1i_2\cdots i_m w}&=\sum_{l=1}^{m}t^l\alpha_{i_{l+1}\cdots i_m w}
    \end{align*}
    for any $i_1<i_2<\cdots< i_m$ ($1\le m \le k$) and $w=w_1\cdots w_n$ with $w_1\le i_m$ (or $w=\Omega$). In particular, we have $\lambda\neq 1$. By iterating the second formula and the binomial theorem, we have
    \begin{align*}\alpha_{i_1i_2\cdots i_m w}&= \alpha_{w}\sum_{l=1}^{m}\sum_{m_1+m_2\cdots+m_l=m}\frac{t^m}{(\lambda-1)^l} \\
    &=\alpha_{w}\sum_{l=1}^{m}\binom{m-1}{l-1}\frac{t^m}{(\lambda-1)^l} \\
    &= \frac{t^m\lambda^{m-1}}{(\lambda-1)^m} \alpha_w.
    \end{align*}
    For each $w \in [d]^*$, we can write $w=w_1\cdots w_m$ with $w_i \in [d]^*$ such that each $w_i=x^{(i)}_1\cdots x_{|w_i|}^{(i)}$ is increasing, i.e. $x^{(i)}_1<\cdots <x_{|w_i|}^{(i)}$ and $x_1^{(i+1)}\le x_{|w_i|}^{(i)}$ ($i=1,\ldots,m-1$).
    For such $w$, we have
    \[ \alpha_w=\alpha_{w_1\cdots w_m}=\frac{t^{|w|}\lambda^{|w|-m}}{(\lambda-1)^{|w|+1}}.\]
    Note that $|w|-m$ means the number of increasing steps in the word $w$, i.e. the number of $i=1,\ldots,|w|$ such that $i+1$-th letter is strictly bigger than $i$-th letter.
    Let us set $a_n=\sum_{|w|=n}|\alpha_w|^2$. Then, we have for $n \ge 1$
    \[ a_n =\frac{t^2}{|\lambda-1|^{4}}{}^t\!\mathbf{1}\left(\frac{t^2M}{|\lambda-1|^2}\right)^{n-1} \mathbf{1}\]
\end{exam}
where $\mathbf{1}={}^t\!(1,1,\cdots,1)$ and $M \in M_k(\mathbb{C})$ is defined by
\[ M_{ij}=\begin{cases}
    |\lambda|^2 \ (i<j)\\
    1\quad \ (i\ge j).
\end{cases}\]
Since all entries of $M$ are non-negative, $\lim_{n \to \infty}a_n=0$ if and only if $ (t^2|\lambda-1|^{-2}M)^n \mathbf{1}$ goes to $0$. When $\lambda\neq 0$, all entries of $M$ are positive, and there exists a left eigenvector ${}^t\!v$ of $M$ corresponding to the Perron-Frobenius eigenvalue $\rho$ whose entries are positive. Since ${}^t\!v \mathbf{1}>0$, $M^n \mathbf{1}$ contains $\rho^n$. When $\lambda=0$, the eigenvalue of $M$ is $1$ and ${}^t\!\mathbf{1}M^n{}^t\!\mathbf{1}$ has order $n^{k-1}$.  
Therefore, $\lim_{n\to \infty }a_n =0$ if and only if the spectral radius of $M$ is less than $t^{-2}|\lambda-1|^2$ in this case. The characteristic polynomial of $M$ is
\[ \frac{(x-1+|\lambda|^2)^k-|\lambda|^2x^k}{1-|\lambda|^2}.\]
Then, the eigenvalues of $M$ are 
\[\left\{\frac{|\lambda|^2-1}{|\lambda|^{\frac{2}{k}}\exp(\frac{2\pi l}{k}\sqrt{-1})-1} \Bigg| \ l=0,\ldots,k-1\right\},\]
and the spectral radius of $M$ is  
\[ \frac{|\lambda|^2-1}{|\lambda|^{2/k}-1}.\]
Therefore, we have that $(\lambda-f)^{-1}\in L^2(c,\tau)$ if and only if
\[  \frac{|\lambda|^2-1}{|\lambda|^{2/k}-1}<t^{-2}|\lambda-1|^2 \]
and the spectral set is
\[ \mathrm{spec}(f)=\left\{\lambda \in \mathbb{C}\Bigg| \frac{|\lambda-1|^2(|\lambda|^{2/k}-1)}{|\lambda|^2-1}\le t^2\right\},\]
which is equal to the support of the Brown measure of $f$ by \cite[Theorem 1.27]{driver2025matrixrandomwalkslima} with $u_0=1$ and $t=k{t'}^2$.
 
  As the last example, we prove Theorem \ref{quadratic} for quadratic polynomials in two free circular operators $c_1,c_2$. Their Brown measures and weak convergence of the random matrix models are discussed in \cite{MR4492979}.  We ignore constant terms since it is just a shift of the spectrum.
  \begin{thm}\label{quadratic2}
  For a quadratic polynmial in free circular random variables $c_1,c_2$ written as $f= \sum_{i,j= 1}^2 a_{ij} c_i c_j +\sum_{i=1}^2 b_i c_i$ with $A=(a_{ij})\in M_2(\mathbb{C})$, $b={}^t\!\begin{pmatrix}
      b_1&b_2
  \end{pmatrix}\in \mathbb{C}^2$, the spectral set of $f$ is the set of $\lambda \in \mathbb{C}$ such that $\lambda=0$ or $\lambda$ satisfies \[\lim_{n\to \infty}Q_{\lambda}^ne_1\neq0,\] where $e_1={}^t\begin{pmatrix}
      1&0&0&0&0&0
  \end{pmatrix}$ and $Q_{\lambda}$ is the following $6\times 6$ matrix  :
  \[Q_{\lambda}=\begin{pmatrix}
      b_{\lambda}^* b_\lambda& \mathrm{Tr}(|A_{\lambda}|^2)& b_{\lambda}^*A_{\lambda} & {}^t\! b_{\lambda} \overline{A_{\lambda}}\\
      1 & 0& 0& 0 \\
      \overline{b_{\lambda}}&0 &0 & \overline{A_{\lambda}}\\
      b_{\lambda} & 0 & A_{\lambda} & 0
  \end{pmatrix}\]
  with $A_{\lambda}=\lambda^{-1}A$ and $b_{\lambda}=\lambda^{-1}b$.
  \end{thm}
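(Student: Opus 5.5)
Following the three-step method at the end of Section 3, I reduce everything to a recursion on the Fock space coefficients. First, $\lambda=0$ always lies in the spectrum. Indeed, $f\Omega$ is orthogonal to $\Omega$ (because $f$ has no constant term), so $f^{-1}\Omega=\sum\alpha_w u_w$ together with $f f^{-1}\Omega=\Omega$ is impossible. By Lemma \ref{holmorphic}, the $\Omega$-coefficient of $f\sum\alpha_w u_w$ vanishes, since $f$ acts as a sum of left creations. Now fix $\lambda\neq0$. By Corollary \ref{spectrum} and Lemma \ref{holmorphic}, $\lambda\notin\mathrm{spec}(f)$ if and only if there is a square-summable family $(\alpha_w)_{w\in[2]^*}$ with $(\lambda-f)\sum\alpha_wu_w=\Omega$, where $c_i$ acts as $l(u_i)$. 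Comparing coefficients gives $\alpha_\Omega=\lambda^{-1}$, $\alpha_i=\lambda^{-1}b_i\alpha_\Omega$, and for $|w|\ge2$
\[\alpha_{ijw}=(A_\lambda)_{ij}\alpha_w+(b_\lambda)_i\alpha_{jw}.\]
Thus $\alpha$ is uniquely determined; it is a formal solution, and the only question is whether $\sum|\alpha_w|^2<\infty$. Lemma \ref{keylemma} and the argument of Lemma \ref{lemma3} show this is equivalent to $a_n:=\sum_{|w|=n}|\alpha_w|^2\to0$ (with geometric decay in that case).

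The core of the argument is to close the recursion for $a_n$ into a finite linear system. I will introduce the vectors $\beta_n=(\sum_{|w|=n}\alpha_{jw})_{j}$ weighted appropriately. More precisely, define
\[x_n=\sum_{|w|=n}\alpha_w\overline{\alpha_w},\qquad y_n\in\mathbb{C}^2,\ (y_n)_j=\sum_{|w|=n-1}\alpha_{jw}\overline{\alpha_w},\qquad z_n\in\mathbb{C}^2,\ (z_n)_j=\sum_{|w|=n-1}\overline{\alpha_{jw}}\alpha_w.\]
These are the cross terms between consecutive levels; note $z_n=\overline{y_n}$. Expanding
\[|\alpha_{ijw}|^2=|(A_\lambda)_{ij}|^2|\alpha_w|^2+|(b_\lambda)_i|^2|\alpha_{jw}|^2+2\mathrm{Re}\big((A_\lambda)_{ij}\overline{(b_\lambda)_i}\alpha_w\overline{\alpha_{jw}}\big)\]
and summing over $i,j,w$ gives $x_{n+2}=b_\lambda^*b_\lambda x_{n+1}+\mathrm{Tr}(|A_\lambda|^2)x_n+(\text{linear in }y_{n+1},z_{n+1})$. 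The cross sums themselves satisfy
\[(y_{n+2})_i=\sum_{j}\sum_w\alpha_{ijw}\overline{\alpha_{jw}}=\sum_j(A_\lambda)_{ij}\overline{(z_{n+1})_j}\text{-type terms}+(b_\lambda)_ix_{n+1}.\]
I then match the precise conjugations and index orders with the block rows $(\overline{b_\lambda},0,0,\overline{A_\lambda})$ and $(b_\lambda,0,A_\lambda,0)$ of $Q_\lambda$. Setting $X_n={}^t(x_{n+1},x_n,y_{n+1},z_{n+1})\in\mathbb{C}^6$ gives $X_{n+1}=Q_\lambda X_n$. Checking the initial data ($x_1$, $x_0=|\lambda|^{-2}$, $y_1,z_1$ from $\alpha_i=\lambda^{-1}(b_\lambda)_i$) should show that $X_0$ is a multiple of $e_1$ after a shift of index (or that $X_1=Q_\lambda(|\lambda|^{-2}e_1)$-type relation holds). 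From this, $x_n$ is, up to the constant $|\lambda|^{-2}$, the first coordinate of $Q_\lambda^n e_1$.

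Finally, I would show that $x_n\to0$ if and only if $Q_\lambda^ne_1\to0$. One direction is immediate because $x_n$ is a coordinate of $Q_\lambda^ne_1$ (and $x_{n-1}$ is the second). For the converse, Cauchy--Schwarz gives $|(y_n)_j|\le\sqrt{x_n x_{n-1}}$ and similarly for $z_n$, so all six coordinates tend to $0$ when $x_n\to0$. Combining this with the first paragraph yields the theorem. I expect the main obstacle to be the bookkeeping in the second step: getting the exact conjugation and transpose conventions so that the cross terms close under the recursion with precisely the blocks $b_\lambda^*A_\lambda$, ${}^t b_\lambda\overline{A_\lambda}$, $A_\lambda$, $\overline{A_\lambda}$, and verifying that the initial vector is exactly proportional to $e_1$. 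I would first try the normalization $y$ = pairs $(\alpha_{jw},\overline{\alpha_w})$ with $j$ prepended and adjust if the blocks come out transposed.
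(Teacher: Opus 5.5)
Your proposal is correct and is essentially the paper's proof. It uses the same coefficient recursion $\alpha_{ijw}=(A_\lambda)_{ij}\alpha_w+(b_\lambda)_i\alpha_{jw}$ (valid for every $w$, including $w=\Omega$), the same consecutive-level cross sums (your $z_{n+1}$ is the paper's $(y_n,z_n)$ and your $y_{n+1}$ its conjugate, so ordering the state as $(x_{n+1},x_n,z_{n+1},y_{n+1})$ gives exactly $|\lambda|^{-2}Q_\lambda^{n+1}e_1$, while your ordering only conjugates $Q_\lambda$ by a permutation fixing $e_1$), and Cauchy--Schwarz together with Lemma~\ref{keylemma} for the two directions. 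The one point to tighten: for $x_n\to0\Rightarrow\sum_n x_n<\infty$, the finite-dimensionality hypothesis of Lemma~\ref{keylemma} must come from this matrix-power formula (or directly from decomposing $e_1$ into generalized eigenspaces of $Q_\lambda$), not from the argument of Lemma~\ref{lemma3}, which presupposes $(\lambda-f)^{-1}\in L^2$.
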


  \begin{proof}
    Assume $(\lambda-f)^{-1}\in L^2(c,\tau)$ and we set $(\lambda-f)^{-1}\Omega=\sum_{w \in [d]^*}\alpha_w u_w$. Then, we obtain from $(\lambda-f)\sum_{w\in [d]^*}\alpha_w u_w =\Omega$ that $\lambda \neq 0$ and
    \begin{align*}
       \alpha_{\Omega}&=\frac{1}{\lambda}, \quad \alpha_1=\frac{b_1}{\lambda^2}, \quad \alpha_2= \frac{b_2}{\lambda^2}, \\
       \alpha_{11w}&=\frac{1}{\lambda}(a_{11}\alpha_w+b_1\alpha_{1w}), \quad \alpha_{12w}=\frac{1}{\lambda}(a_{12}\alpha_w+b_1\alpha_{2w}), \\
      \alpha_{21w}&=\frac{1}{\lambda}(a_{21}\alpha_w+b_2\alpha_{1w}), \quad \alpha_{22w}=\frac{1}{\lambda}(a_{22}\alpha_w+b_2\alpha_{2w}).
    \end{align*}
      Let us set \[x_n=\sum_{|w|=n}|\alpha_w|^2, \quad y_n=\sum_{|w|=n} \alpha_{w}\overline{\alpha_{1w}},\quad z_n=\sum_{|w|=n} \alpha_{w}\overline{\alpha_{2w}}.\]
      Then, we get the recursion
      \begin{align*}
          x_{n+2}&=\sum_{i,j=1}^2\sum_{|w|=n}|\alpha_{ijw}|^2\\&=\frac{1}{|\lambda|^2}\sum_{i,j=1}^2\sum_{|w|=n}|a_{ij}\alpha_{w}+b_i\alpha_{jw}|^2\\
          &=(b_{\lambda}^*b_{\lambda})x_{n+1}+\mathrm{Tr}(|A_{\lambda}|^2)x_n+b_{\lambda}^*A_{\lambda}\begin{pmatrix}
              y_n\\ z_n
          \end{pmatrix}+{}^t\!b_{\lambda}\overline{A_{\lambda}}\begin{pmatrix}
              \overline{y_n}\\
              \overline{z_n}
          \end{pmatrix}.
      \end{align*}
      We also have
      \begin{align*}
          y_{n+1}&=\sum_{i=1}^2\sum_{|w|=n}\alpha_{iw}\overline{\alpha_{1iw}}\\
          &=\frac{1}{\overline{\lambda}}\sum_{i=1}^2\sum_{|w|=n}\alpha_{iw}(\overline{a_{1i}\alpha_w+b_1\alpha_{iw}})\\
          &=\overline{\left(\frac{b_1}{\lambda}\right)}x_{n+1}+\overline{\left(\frac{a_{11}}{\lambda}\right)}\overline{y_n}+\overline{\left(\frac{a_{12}}{\lambda}\right)}\overline{z_n},
      \end{align*}
      and similarly,
       \begin{align*}
          z_{n+1}&=\sum_{i=1}^2\sum_{|w|=n}\alpha_{iw}\overline{\alpha_{2iw}}\\
          &=\frac{1}{\overline{\lambda}}\sum_{i=1}^2\sum_{|w|=n}\alpha_{iw}(\overline{a_{2i}\alpha_w+b_2\alpha_{iw}})\\
          &=\overline{\left(\frac{b_2}{\lambda}\right)}x_{n+1}+\overline{\left(\frac{a_{21}}{\lambda}\right)}\overline{y_n}+\overline{\left(\frac{a_{22}}{\lambda}\right)}\overline{z_n}.
      \end{align*}
      This implies
      \[ \begin{pmatrix}
          y_n\\z_n
      \end{pmatrix}=\overline{b_{\lambda}}x_{n+1}+\overline{A_{\lambda}}\begin{pmatrix}
          \overline{y_n}\\ \overline{z_n}
      \end{pmatrix}.\]
      Since the initial condition for this recursion is
      \begin{align*}
          x_0&=\frac{1}{|\lambda|^2}, \quad x_1=\frac{1}{|\lambda|^2}b_{\lambda}^*b_{\lambda}, \\
          y_0&=\overline{\left(\frac{b_1}{|\lambda|^2\lambda}\right)},\quad z_0=\overline{\left(\frac{b_2}{|\lambda|^2\lambda}\right)},
      \end{align*}
      we have for $n \ge 0$,
      \[ \begin{pmatrix}x_{n+1}\\ x_n \\ y_n\\z_n \\ \overline{y_n}\\ \overline{z_n}\end{pmatrix}=\frac{1}{|\lambda|^2} Q_{\lambda}^{n+1} e_1
      \]
      where $e_1={}^t\!(1,0,0,0,0,0)$. 
      Note that by the Cauchy-Schwarz inequality, $|y_n|$ and $|z_n|$ are bounded by $\sqrt{x_{n}x_{n+1}}$ which goes to $0$ as $n \to \infty$. Therefore, we must have $\lim_{n\to \infty}Q_{\lambda}^{n+1} e_1=0$. On the other hand, if $\lim_{n\to \infty}Q_{\lambda}^{n+1} e_1=0$, then $\lim_{n\to \infty}x_n=0$ and $\|(\lambda-f)^{-1}\|_2^2=\sum_{n=0}^{\infty}x_n <\infty$ by Lemma \ref{keylemma}. Therefore, $\lambda \in \mathrm{spec}^2_1(f)^c=\mathrm{spec}(f)^c$.
  \end{proof}
  
  \begin{rem}\label{equivalence_spectral2}
     As we mentioned in Remark \ref{equiv_sepctral1}, there is a possibility that the spectral radius of $ r(Q_{\lambda})\ge 1$ doesn't imply $\lim_{n\to \infty}Q_{\lambda}^ne_1 \neq 0$, but we can prove the equivalence in several cases.  
    To see this, let $\rho=\rho(\lambda)$ be an eigenvalue of $Q_{\lambda}$ such that $e_1$ has no component from the generalized eigenspaces $V_{\rho}$ when we decompose $\mathbb{C}^6$ into the generalized eigenspaces of $Q_{\lambda}$. The problem happens when $|\rho|$ is the largest among all absolute values of eigenvalues of $Q_{\lambda}$. For such $\rho$ (we may assume $\rho\neq 0$) and any left eigenvector ${}^t\!v$ of $Q_{\lambda}$ corresponding to $\rho$, we must have ${}^t\!ve_1=0$. Let us set $ {}^t\!v=\begin{pmatrix}
        v_1&v_2&v_3&v_4&v_5&v_6
    \end{pmatrix}$. Then, we have ${}^t\!ve_1=v_1=0$. Since ${}^t\!vQ_{\lambda}=\rho {}^t\!v$, we have $\rho v_2 = \mathrm{Tr}(|A_{\lambda}|^2)v_1=0$ and thus $v_2=0$. Therefore ${}^t\!v$ must satisfy
    \begin{align*}
\begin{pmatrix}
    v_3 & v_4
\end{pmatrix} \overline{b_{\lambda}}+ \begin{pmatrix}
    v_5 & v_6
\end{pmatrix}b_{\lambda}&=0,\\
\begin{pmatrix}
    v_3 & v_4 & v_5 & v_6
\end{pmatrix} \begin{pmatrix}
    0 & \overline{A_{\lambda}}\\
    A_{\lambda} & 0
\end{pmatrix}&=\rho \begin{pmatrix}
    v_3 & v_4 & v_5 & v_6
\end{pmatrix}.
    \end{align*}
  
  In particular, $\rho$ should be an eigenvalue of $\begin{pmatrix}
      0 & \overline{A_{\lambda}} \\
      A_{\lambda}& 0
  \end{pmatrix}$.
  A typical case where such ${}^t \! v$ exists is when $b=0$. In this case, $Q_{\lambda}$ is equal to
 \[ \begin{pmatrix}
      0 & \mathrm{Tr}(|A_{\lambda}|^2) & 0 & 0\\
      1 & 0 & 0 & 0\\
      0 & 0 & 0 & \overline{A_{\lambda}}\\
      0  & 0 & A_{\lambda} & 0
  \end{pmatrix},\]
  and $\sqrt{\mathrm{Tr}(|A_{\lambda}|)}$ is an eigenvalue which is greater than or equal to $|\rho|$, so $e_1$ involves an eigenvalue which is greater than or equal to $|\rho|$. 
  
  If $\rho$ is an eigenvalue of $\begin{pmatrix}
      0 & \overline{A_{\lambda}} \\
      A_{\lambda}& 0
  \end{pmatrix}$, then $\rho^2$ is an eigenvalue of $A_{\lambda}\overline{A_{\lambda}}$ and $\overline{A_{\lambda}}A_{\lambda}$. Let $\rho_1$ and $\rho_2$ be eigenvalues of $A_{\lambda}\overline{A_{\lambda}}$ (and $\overline{A_{\lambda}}A_{\lambda}$) with $|\rho_1|\ge|\rho_2|$. Then, we have $\rho_1=\overline{\rho_2}$ or $\rho_1,\rho_2$ are real numbers with the same sign since $\rho_1\rho_2=|\det(A_{\lambda})|^2$. If $\rho_1=\overline{\rho_2}$, then we have $|\rho_1|^2=\rho_1\rho_2=|\det(A_{\lambda})|^2$, so $ |\rho_1|=|\det(A_{\lambda})|$. On the other hand, we can see $\det(Q_{\lambda})=-\mathrm{Tr}(|A_{\lambda}|^2)|\det(A_{\lambda})|^2$. Since $|\rho|$ is largest and $|\rho|^2=|\rho_1|$, we should have
  \[ \mathrm{Tr}(|A_{\lambda}|^2)|\det(A_{\lambda})|^2\le |\rho|^6=|\rho_1|^3=|\det(A_{\lambda})|^3.\] However, $|\det(A_{\lambda})|<\mathrm{Tr}(|A_{\lambda}|^2)$ for $A_{\lambda}\neq 0$, which is a contradiction. Therefore, $\rho_1$ and $\rho_2$ should be different real numbers. Since $ \overline{A_{\lambda}}A_{\lambda}=\frac{1}{|\lambda|^2}\overline{A}A$, a counterexample requires that $\overline{A}A$ have different real eigenvalues. Note that by the same reason (comparison of determinants), $\rho^2$ should be equal to $\rho_1$, not $\rho_2$. Moreover, if $x$ is an eigenvector of $\overline{A_{\lambda}}A_{\lambda}$ corresponding to $\rho_1$, then, $\overline{A_{\lambda}}\overline{x}$ is also an eigenvector of $\overline{A_{\lambda}}A_{\lambda}$ corresponding to $\rho_1$ since $\rho_1$ is a real number. Since the dimensions of eigenspaces of $\overline{A_{\lambda}}A_{\lambda}$ are $1$, there exists $\alpha \in \mathbb{C}$ such that $\overline{A_{\lambda}}\overline{x}=\alpha x$. By multiplying both side by $ A_{\lambda}$, we have 
  \[A_{\lambda}\overline{A_{\lambda}}\overline{x}=\alpha A_{\lambda}x=|\alpha|^2\overline{x}. \] Since $A_{\lambda}\overline{A_{\lambda}}\overline{x}=\rho_1 \overline{x}$, we have $ \rho_1=|\alpha|^2> 0$. Therefore, to have a counterexample, we may assume $\rho_1>\rho_2\ge 0$ and eigenvalues of $\begin{pmatrix}
      0 & \overline{A_{\lambda}} \\
      A_{\lambda}& 0
  \end{pmatrix}$ are $\pm \sqrt{\rho_1}$ and $\pm \sqrt{\rho_2}$, and $\rho=\pm\sqrt{\rho_1}$. 
  
  Now, we assume $A_{\lambda}$ is symmetric. Then, $A$ is also symmetric and $A_{\lambda}\overline{A_{\lambda}}=A_{\lambda}A_{\lambda}^*$ is a self-adjoint matrix. Let $b_1, b_2$ be eigenvectors of $A_{\lambda}\overline{A_{\lambda}}$ corresponding to $\rho_1,\rho_2$ such that $b_{\lambda}=b_1+b_2$. Since $\begin{pmatrix}
      v_5&v_6
  \end{pmatrix}$ is a left eigenvector of $A_{\lambda}\overline{A_{\lambda}}$ corresponding to $\rho^2=\rho_1$ (recall that $(v_1,..,v_6)$ is a left eigenvector of $\begin{pmatrix}
      0 & \overline{A_{\lambda}} \\
      A_{\lambda}& 0
  \end{pmatrix}$), we have $\begin{pmatrix}
      v_5&v_6
  \end{pmatrix}b_{\lambda}=\begin{pmatrix}
      v_5&v_6
  \end{pmatrix}b_1$, and similarly, we have  $\begin{pmatrix}
    v_3&v_4  
  \end{pmatrix}\overline{b_{\lambda}}=\begin{pmatrix}
      v_3&v_4
  \end{pmatrix}\overline{b_1} $. We can also see that there exist $\alpha_1,\alpha_2\in \mathbb{C}$ such that $ \overline{A_{\lambda}} b_1=\alpha_1\overline{b_1}$ and $ \overline{A_{\lambda}} b_2=\alpha_2\overline{b_2}$. By applying the condition of ${}^t\!v$, we have
  \[ \alpha_1[\begin{pmatrix}
    v_3 & v_4
\end{pmatrix} \overline{b_{\lambda}}+ \begin{pmatrix}
    v_5 & v_6
\end{pmatrix}b_{\lambda}]= \begin{pmatrix}
    v_3 & v_4
\end{pmatrix}  \overline{A_{\lambda}}b_1+\alpha_1 \begin{pmatrix}
    v_5 & v_6
\end{pmatrix}b_1= 0.\] Since we have $\begin{pmatrix}
    v_3 & v_4
\end{pmatrix}  \overline{A_{\lambda}}=\rho\begin{pmatrix}
    v_5 & v_6
\end{pmatrix}$, we obtain $ \alpha_1=-\rho$ or $\begin{pmatrix}
    v_3 & v_4
\end{pmatrix} \overline{b_1}=\begin{pmatrix}
    v_5 & v_6
\end{pmatrix}b_1=0$. If $\begin{pmatrix}
    v_3 & v_4
\end{pmatrix} \overline{b_1}=\begin{pmatrix}
    v_5 & v_6
\end{pmatrix}b_1=0$, $b_1$ should be an eigenvector of $ A_{\lambda}\overline{A_{\lambda}}$ corresponding to $\rho_2$, so $b_1=0$. For $x \not\in \{\pm \sqrt{\rho_1},\pm\sqrt{\rho_2}\}$,
we compute the determinant by the Schur complement formula, and we have
\[ \det(xI_6-Q_{\lambda})=\det\begin{pmatrix}
    xI_2 & -\overline{A_{\lambda}}\\
    -A_{\lambda}& xI_2
\end{pmatrix} \det(M_1-M_2) \]
where

 \[ M_1=\begin{pmatrix}
    x-b_{\lambda}^*b_{\lambda} & -\mathrm{Tr}(|A_{\lambda}|^2) \\
    -1 & x
\end{pmatrix},\]
  and 
  \[M_2= \begin{pmatrix}
    {}^t\!b_{\lambda} & b_{\lambda}^* \\ 0 & 0
\end{pmatrix}\begin{pmatrix}
      0 & \overline{A_{\lambda}} \\
      A_{\lambda}& 0
  \end{pmatrix}\begin{pmatrix}
       xI_2 & -\overline{A_{\lambda}}\\
    -A_{\lambda}& xI_2
  \end{pmatrix}^{-1}\begin{pmatrix}
    \overline{b_{\lambda}} & 0 \\ b_{\lambda} & 0
\end{pmatrix}=\begin{pmatrix}
    M_3 & 0 \\ 0 & 0
\end{pmatrix}\]
with 
\begin{align*}
    M_3&=\begin{pmatrix}
    {}^t\!b_{\lambda} & b_{\lambda}^* 
\end{pmatrix}\begin{pmatrix}
      0 & \overline{A_{\lambda}} \\
      A_{\lambda}& 0
  \end{pmatrix}\begin{pmatrix}
       xI_2 & -\overline{A_{\lambda}}\\
    -A_{\lambda}& xI_2
  \end{pmatrix}^{-1}\begin{pmatrix}
    \overline{b_{\lambda}} \\ b_{\lambda} 
\end{pmatrix}\\&= -2b_{\lambda}^*b_{\lambda}+x\begin{pmatrix}
    {}^t\!b_{\lambda} & b_{\lambda}^* 
\end{pmatrix}\begin{pmatrix}
       xI_2 & -\overline{A_{\lambda}}\\
    -A_{\lambda}& xI_2
  \end{pmatrix}^{-1}\begin{pmatrix}
    \overline{b_{\lambda}} \\ b_{\lambda} 
\end{pmatrix}.
\end{align*} 
Therefore, we obtain
\[ \det(M_1-M_2)=x^2+b_{\lambda}^*b_{\lambda}x-\mathrm{Tr}(|A_{\lambda}|^2)-x^2\begin{pmatrix}
    {}^t\!b_{\lambda} & b_{\lambda}^* 
\end{pmatrix}\begin{pmatrix}
       xI_2 & -\overline{A_{\lambda}}\\
    -A_{\lambda}& xI_2
  \end{pmatrix}^{-1}\begin{pmatrix}
    \overline{b_{\lambda}} \\ b_{\lambda} 
\end{pmatrix}.\]
Note that we have
\[\begin{pmatrix}
    xI_2 & -\overline{A_{\lambda}}\\
    -A_{\lambda}& xI_2
\end{pmatrix}^{-1}=\begin{pmatrix}
    x(x^2I_2-\overline{A_{\lambda}}A_{\lambda})^{-1} & (x^2I_2-\overline{A_{\lambda}}A_{\lambda})^{-1}\overline{A_{\lambda}}\\
   (x^2I_2-A_{\lambda}\overline{A_{\lambda}})^{-1} A_{\lambda}& x(x^2I_2-A_{\lambda}\overline{A_{\lambda}})^{-1}
\end{pmatrix}. \]
When $b_1=0$, by using the relation $ \overline{A_{\lambda}}b_2=\alpha_2\overline{b_2}$, we have
\begin{align*}
    \begin{pmatrix}
    {}^t\!b_2 & b_2^* 
\end{pmatrix}\begin{pmatrix}
       xI_2 & -\overline{A_{\lambda}}\\
    -A_{\lambda}& xI_2
  \end{pmatrix}^{-1}\begin{pmatrix}
    \overline{b_2} \\ b_2 
\end{pmatrix}&=\begin{pmatrix}
    {}^t\!b_2 & b_2^* 
\end{pmatrix}
\begin{pmatrix}
        (x+\alpha_2)(x^2-\rho_2)^{-1}\overline{b_2}\\
        (x+\overline{\alpha_2})(x^2-\rho_2)^{-1}b_2
    \end{pmatrix}\\
    &=(2x+\alpha_2+\overline{\alpha_2})(x^2-\rho_2)^{-1}b_2^*b_2.
\end{align*}
Therefore, for $x \in \mathbb{R}\setminus \{\pm\sqrt{\rho_2}\}$, we have
\begin{align*}
    \det(M_1-M_2)&=x^2+b_2^*b_2x-\mathrm{Tr}(|A_{\lambda}|^2)-x^2(2x+\alpha_2+\overline{\alpha_2})(x^2-\rho_2)^{-1}b_2^*b_2\\
    &=x^2-\mathrm{Tr}(|A_{\lambda}|^2)-x(x^2+(\alpha_2+\overline{\alpha_2})x+\rho_2)(x^2-\rho_2)^{-1}b_2^*b_2\\
    &=x^2-\mathrm{Tr}(|A_{\lambda}|^2)-x|x+\alpha_2|^2(x^2-\rho_2)^{-1}b_2^*b_2.
\end{align*} 
where we use  $|\alpha_2|^2=\rho_2$ obtained from $\overline{A_{\lambda}}b_2=\alpha_2\overline{b_2}$ by multiplying $ A_{\lambda}$.
Since $\mathrm{Tr}(|A_{\lambda}|^2)=\rho_1+\rho_2$, we have $\det(M_1-M_2)<0$ at $x=\sqrt{\rho_1}$. On the other hand, $\det(M_1-M_2)$ goes to infinity as $x \to \infty$. Therefore, there exists $x > \sqrt{\rho_1}$ such that $\det(M_1-M_2)=0$ and this $x$ becomes an eigenvalue of $Q_{\lambda}$ greater than $|\rho|=\sqrt{\rho_1}$. When $\alpha_1=-\rho$, then the relations, $\overline{A_{\lambda}} b_1=\alpha_1\overline{b_1}$ and $ \overline{A_{\lambda}} b_2=\alpha_2\overline{b_2}$, tell us
\begin{align*}
  &  \begin{pmatrix}
    {}^t\!b_{\lambda} & b_{\lambda}^* 
\end{pmatrix}\begin{pmatrix}
       xI_2 & -\overline{A_{\lambda}}\\
    -A_{\lambda}& xI_2
  \end{pmatrix}^{-1}\begin{pmatrix}
    \overline{b_{\lambda}} \\ b_{\lambda} 
\end{pmatrix}\\&=\begin{pmatrix}
    {}^t\!b_{\lambda} & b_{\lambda}^* 
\end{pmatrix}
\begin{pmatrix}
      (x+\alpha_1)(x^2-\rho_1)^{-1}\overline{b_1}+  (x+\alpha_2)(x^2-\rho_2)^{-1}\overline{b_2}\\
        (x+\overline{\alpha_1})(x^2-\rho_1)^{-1}b_1+(x+\overline{\alpha_2})(x^2-\rho_2)^{-1}b_2
    \end{pmatrix}\\
    &=\begin{pmatrix}
    {}^t\!b_{\lambda} & b_{\lambda}^* 
\end{pmatrix}
\begin{pmatrix}
      (x+\rho)^{-1}\overline{b_1}+  (x+\alpha_2)(x^2-\rho_2)^{-1}\overline{b_2}\\
        (x+\rho)^{-1}b_1+(x+\overline{\alpha_2})(x^2-\rho_2)^{-1}b_2
    \end{pmatrix}\\
    &=  2(x+\rho)^{-1}b_1^*b_1+  (2x+\alpha_2+\overline{\alpha_2})(x^2-\rho_2)^{-1}b_2^*b_2
\end{align*}
where we use the assumption $\rho=\pm \sqrt{\rho_1}$ and $b_1, b_2$ are orthogonal (since $A_{\lambda}\overline{A_{\lambda}}$ is self-adjoint). Therefore, for $x \in \mathbb{R}\setminus \{-\rho, \pm\sqrt{\rho_2}\}$, $\det(M_1-M_2)$ is equal to
\[ x^2-\mathrm{Tr}(|A_{\lambda}|^2)-x(x-\rho)(x+\rho)^{-1}-x|x+\alpha_2|^2(x^2-\rho_2)^{-1}b_2^*b_2.\]
   If $\rho=-\sqrt{\rho_1}$, then $\det(M_1-M_2)$ goes to $-\infty$ as $x \to \sqrt{\rho_1}+0$ and there exists $x>\sqrt{\rho_1}$ such that $\det(M_1-M_2)$ is equal to $0$. If $\rho=\sqrt{\rho_1}$, then at $x=\sqrt{\rho_1}$, $\det(M_1-M_2)$ is equal to
   \[ -\rho_2-\sqrt{\rho_1}|\sqrt{\rho_1}+\alpha_2|^2(\rho_1-\rho_2)^{-1}b_2^*b_2.\] Unless $\rho_2=0$ and $b_2^*b_2=0$, there exists $x>\sqrt{\rho_1}$ such that $\det(M_1-M_2)$ is equal to $0$. Finally, if $\rho_2=0$ and $b_2^*b_2=0$, then we can check that the row vector $\begin{pmatrix}
       1& \rho & -\frac{1}{2}{}^t\!b_1 & -\frac{1}{2}b_1^* 
   \end{pmatrix} $ is a left eigenvector of $Q_{\lambda}$ corresponding to $\rho$ by using the symmetry of $A$ and the relations $b_{\lambda}=b_1$ and $\overline{A_{\lambda}}b_1=-\rho \overline{b_1}$. In this case $e_1$ has a component from the generalized eigenspace of $\rho$. 
   
   To summarize these arguments, we can replace the condition $\lim_{n\to \infty}Q_{\lambda}^ne_1 \neq 0$ in Theorem \ref{quadratic2} by $ r(Q_{\lambda})\ge 1$ if one of the following holds:
  \begin{itemize}
      \item $b=0$. 
      \item $\overline{A}A$ doesn't have different real eigenvalues.
      \item $A$ is symmetric.
  \end{itemize}
  
  \end{rem}
  \subsection*{Acknowledgements}
  The author is grateful to Tao Mei for raising the question about the equivalence between $L^p$-integrability and $L^{\infty}$-boundedness for non-commutative rational functions. The author gratefully acknowledges the hospitality of Todd Kemp during the author’s postdoctoral stay at the University of California, San Diego. 
  The author is also grateful to Ching Wei Ho, Adrian Ioana, Ping Zhong, and Junichiro Matsuda for useful discussions and references. The author acknowledges support from JSPS Overseas Research Fellowships.
  \bibliographystyle{amsalpha}
\bibliography{reference.bib}
\end{document}